\theoremstyle{plain}
\newtheorem{theorem}{Theorem}
\newtheorem{lemma}[theorem]{Lemma}
\newtheorem{corollary}[theorem]{Corollary}
\newtheorem{proposition}[theorem]{Proposition}
\theoremstyle{definition}
\newtheorem{definition}[theorem]{Definition}
\newtheorem{example}[theorem]{Example}
\newtheorem{question}[theorem]{Question}
\theoremstyle{remark}
\newtheorem{remark}[theorem]{Remark}
\newtheorem*{acknowledgments}{Acknowledgments}
\tikzset{myarrow/.style={->,>=stealth,line width=0.5pt}}
\DeclareMathOperator{\disc}{disc}
\DeclareMathOperator{\Lat}{Lat}
\DeclareMathOperator{\Mult}{Mult}
\DeclareMathOperator{\res}{res}
\begin{document}

\title{Quadratic rational maps with integer multipliers}

\author{Valentin Huguin}
\address{Institut de Math\'{e}matiques de Toulouse, UMR 5219, Universit\'{e} de Toulouse, CNRS, UPS, F-31062 Toulouse Cedex 9, France}
\email{valentin.huguin@math.univ-toulouse.fr}

\subjclass[2020]{Primary 37P05, 37P35; Secondary 37F10, 37F44}

\begin{abstract}
In this article, we prove that every quadratic rational map whose multipliers all lie in the ring of integers of a given imaginary quadratic field is a power map, a Chebyshev map or a Latt\`{e}s map. In particular, this provides some evidence in support of a conjecture by Milnor concerning rational maps whose multipliers are all integers.
\end{abstract}

\maketitle

\section{Introduction}

Given a rational map $f \colon \widehat{\mathbb{C}} \rightarrow \widehat{\mathbb{C}}$ and a point $z_{0} \in \widehat{\mathbb{C}}$, we study the sequence $\left( f^{\circ n}\left( z_{0} \right) \right)_{n \geq 0}$ of iterates of $f$ at $z_{0}$. The set $\left\lbrace f^{\circ n}\left( z_{0} \right) : n \geq 0 \right\rbrace$ is called the \emph{forward orbit} of $z_{0}$ under $f$.

The point $z_{0}$ is said to be \emph{periodic} for $f$ if there exists an integer $n \geq 1$ such that $f^{\circ n}\left( z_{0} \right) = z_{0}$; the least such integer $n$ is called the \emph{period} of $z_{0}$. The forward orbit of $z_{0}$, which has cardinality $n$, is said to be a \emph{cycle} for $f$. The \emph{multiplier} of $f$ at $z_{0}$ is the unique eigenvalue of the differential of $f^{\circ n}$ at $z_{0}$. The map $f$ has the same multiplier at each point of the cycle.

The multiplier is invariant under conjugacy: if $f$ and $g$ are rational maps, $\phi$ is a M\"{o}bius transformation such that $\phi \circ f = g \circ \phi$ and $z_{0}$ is a periodic point for $f$, then $\phi\left( z_{0} \right)$ is a periodic point for $g$ with the same period and the same multiplier.

We wish to examine here the rational maps that have only integer multipliers.

\begin{definition}
A rational map $f \colon \widehat{\mathbb{C}} \rightarrow \widehat{\mathbb{C}}$ of degree $d \geq 2$ is said to be a \emph{power map} if it is conjugate to $z \mapsto z^{\pm d}$.
\end{definition}

For every $d \geq 2$, there exists a unique polynomial $T_{d} \in \mathbb{C}[z]$ such that \[ T_{d}\left( z +z^{-1} \right) = z^{d} +z^{-d} \, \text{.} \] The polynomial $T_{d}$ is monic of degree $d$ and is called the $d$th \emph{Chebyshev polynomial}.

\begin{definition}
A rational map $f \colon \widehat{\mathbb{C}} \rightarrow \widehat{\mathbb{C}}$ of degree $d \geq 2$ is said to be a \emph{Chebyshev map} if it is conjugate to $\pm T_{d}$.
\end{definition}

\begin{remark}
For every $d \geq 2$, the rational maps $-T_{d}$ and $T_{d}$ are conjugate if and only if $d$ is even.
\end{remark}

These rational maps share the following well-known property:

\begin{proposition}
\label{proposition:specialPC}
Suppose that $f \colon \widehat{\mathbb{C}} \rightarrow \widehat{\mathbb{C}}$ is a power map or a Chebyshev map. Then $f$ has only integer multipliers.
\end{proposition}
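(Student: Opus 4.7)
The plan is to reduce to the normal forms: since multipliers are M\"obius-conjugacy invariants, it suffices to verify the assertion for $z \mapsto z^{\pm d}$ and for $\pm T_{d}$ directly. I would handle power maps and Chebyshev maps as two independent sub-cases, the first by an elementary direct computation and the second through the standard semi-conjugacy with a power map.

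For $f(z) = z^{d}$, the points $0$ and $\infty$ are superattracting fixed points and hence contribute multiplier $0$. Every other periodic point $z_{0} \in \mathbb{C}^{\ast}$ of period $n$ satisfies $z_{0}^{d^{n} - 1} = 1$, so the chain rule gives $(f^{\circ n})'(z_{0}) = d^{n} z_{0}^{d^{n} - 1} = d^{n}$. The case $f(z) = z^{-d}$ is treated the same way after noting the explicit formula $f^{\circ n}(z) = z^{(-d)^{n}}$, which makes each non-trivial multiplier equal to $(-d)^{n}$.

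For $T_{d}$, I would exploit the semi-conjugacy $T_{d} \circ \pi = \pi \circ \sigma_{d}$ with $\pi(z) = z + z^{-1}$ and $\sigma_{d}(z) = z^{d}$. Any periodic point $w_{0}$ of $T_{d}$ of period $n$ distinct from the two critical values $\pm 2$ lifts under $\pi$ to some $z_{0} \in \mathbb{C}^{\ast}$ with $\sigma_{d}^{\circ n}(z_{0}) \in \{z_{0}, z_{0}^{-1}\}$. Differentiating the iterated semi-conjugacy $T_{d}^{\circ n} \circ \pi = \pi \circ \sigma_{d}^{\circ n}$ at $z_{0}$ and using the identity $\pi'(z^{-1}) = -z^{2} \pi'(z)$ when $\sigma_{d}^{\circ n}(z_{0}) = z_{0}^{-1}$, a short calculation produces multiplier $\pm d^{n}$. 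The ramification points $w = \pm 2$, when periodic, are treated separately via $T_{d}^{\circ n} = T_{d^{n}}$ and a short expansion near $\pm 2$, which yields multiplier $d^{2n}$.

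Finally, for $-T_{d}$: when $d$ is even, the preceding remark identifies $-T_{d}$ with $T_{d}$ up to conjugacy and there is nothing more to prove. When $d$ is odd, the oddness of $T_{d}$ gives the analogous semi-conjugacy $-T_{d} \circ \pi = \pi \circ \tau_{d}$ with $\tau_{d}(z) = -z^{d}$, and an essentially identical calculation yields all multipliers in $\{0, \pm d^{n}, d^{2n}\}$. The main subtlety throughout is the careful bookkeeping at the two ramification values of $\pi$, where the naive formula degenerates; everywhere else the argument is mechanical.
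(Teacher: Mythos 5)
The paper states this proposition without proof, citing it as well-known, so there is no argument of the author's to measure yours against. Your proposal is correct and is the standard argument: the direct chain-rule computation for $z \mapsto z^{\pm d}$ (using $z_{0}^{d^{n}-1}=1$, resp.\ $f^{\circ n}(z)=z^{(-d)^{n}}$), the semi-conjugacy $T_{d} \circ \pi = \pi \circ \sigma_{d}$ with $\pi(z)=z+z^{-1}$ away from the ramification values $\pm 2$ (where the two cases $\sigma_{d}^{\circ n}\left( z_{0} \right) = z_{0}$ and $\sigma_{d}^{\circ n}\left( z_{0} \right) = z_{0}^{-1}$ give $d^{n}$ and $-d^{n}$ respectively, via the identity $\pi'\left( z^{-1} \right) = -z^{2}\pi'(z)$), the identity $T_{d}^{\circ n}=T_{d^{n}}$ together with $T_{d^{n}}'(\pm 2)=d^{2n}$ at the ramification values, and the sign-twisted semi-conjugacy with $\tau_{d}(z)=-z^{d}$ for $-T_{d}$ when $d$ is odd. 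All multipliers indeed land in $\lbrace 0, \pm d^{n}, d^{2n} \rbrace \subset \mathbb{Z}$, and your bookkeeping at the superattracting points ($0$, $\infty$, and the cycle $\lbrace 0, \infty \rbrace$ for $z \mapsto z^{-d}$) and at $w=\pm 2$ is the only place where care is genuinely needed; you have handled it correctly.
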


In fact, there exist also other rational maps that satisfy this special condition.

\begin{definition}
A rational map $f \colon \widehat{\mathbb{C}} \rightarrow \widehat{\mathbb{C}}$ of degree $d \geq 2$ is said to be a \emph{Latt\`{e}s map} if there exist a torus $\mathbb{T} = \mathbb{C}/\Lambda$, with $\Lambda$ a lattice in $\mathbb{C}$, a holomorphic map $L \colon \mathbb{T} \rightarrow \mathbb{T}$ and a nonconstant holomorphic map $p \colon \mathbb{T} \rightarrow \widehat{\mathbb{C}}$ that make the following diagram commute:
\begin{center}
\begin{tikzpicture}
\node (M00) at (0,0) {$\mathbb{T}$};
\node (M01) at (2,0) {$\mathbb{T}$};
\node (M10) at (0,-2) {$\widehat{\mathbb{C}}$};
\node (M11) at (2,-2) {$\widehat{\mathbb{C}}$};
\draw[myarrow] (M00) to node[above]{$L$} (M01);
\draw[myarrow] (M10) to node[below]{$f$} (M11);
\draw[myarrow] (M00) to node[left]{$p$} (M10);
\draw[myarrow] (M01) to node[right]{$p$} (M11);
\end{tikzpicture}
\end{center}
\end{definition}

\begin{remark}
Suppose that $\Lambda$ is a lattice in $\mathbb{C}$ and $\mathbb{T} = \mathbb{C}/\Lambda$. Then the holomorphic maps $L \colon \mathbb{T} \rightarrow \mathbb{T}$ are precisely the maps of the form \[ L_{a, b}^{\Lambda} \colon z +\Lambda \mapsto a z +b +\Lambda \, \text{,} \quad \text{with} \quad a, b \in \mathbb{C}, \, a \Lambda \subset \Lambda \, \text{.} \] Moreover, for all $a, b \in \mathbb{C}$ such that $a \Lambda \subset \Lambda$, the map $L_{a, b}^{\Lambda} \colon \mathbb{T} \rightarrow \mathbb{T}$ has degree $\lvert a \rvert^{2}$.
\end{remark}

We distinguish two types of Latt\`{e}s maps. A rational map $f \colon \widehat{\mathbb{C}} \rightarrow \widehat{\mathbb{C}}$ of degree $d \geq 2$ is said to be a \emph{flexible} Latt\`{e}s map if there exist a torus $\mathbb{T} = \mathbb{C}/\Lambda$, with $\Lambda$ a lattice in $\mathbb{C}$, $a \in \mathbb{Z}$, $b \in \mathbb{C}$ and a holomorphic map $p \colon \mathbb{T} \rightarrow \widehat{\mathbb{C}}$ of degree $2$ such that \[ f \circ p = p \circ L_{a, b}^{\Lambda} \, \text{,} \quad \text{where} \quad L_{a, b}^{\Lambda} \colon z +\Lambda \mapsto a z +b +\Lambda \, \text{.} \] A non-flexible Latt\`{e}s map is said to be \emph{rigid}. We refer the reader to~\cite{M2006} or~\cite[Chapter~6]{S2007} for further information about Latt\`{e}s maps.

\begin{remark}
The degree of a flexible Latt\`{e}s map is the square of an integer.
\end{remark}

Given a positive squarefree integer $D$, we denote by $R_{D}$ the ring of integers of the imaginary quadratic field $\mathbb{Q}\left( i \sqrt{D} \right)$.

Latt\`{e}s maps have the following remarkable property:

\begin{proposition}[{\cite[Corollary~3.9 and Lemma~5.6]{M2006}}]
\label{proposition:specialL}
Suppose that $f \colon \widehat{\mathbb{C}} \rightarrow \widehat{\mathbb{C}}$ is a Latt\`{e}s map. Then there exists a positive squarefree integer $D$ such that the multipliers of $f$ all lie in $R_{D}$. Furthermore, the multipliers of $f$ are all integers if and only if $f$ is flexible.
\end{proposition}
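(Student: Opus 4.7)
The strategy is to read off the multipliers of $f$ from the affine lift $L = L_{a,b}^\Lambda$ using the semiconjugacy $p \circ L = f \circ p$.

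First, I would pin down where $a$ lives. Fixing a $\mathbb{Z}$-basis of $\Lambda$, the hypothesis $a\Lambda \subset \Lambda$ exhibits multiplication by $a$ as a $2 \times 2$ matrix with integer entries. Its characteristic polynomial is monic in $\mathbb{Z}[X]$ and has $a$ and $\bar a$ as roots. Hence either $a \in \mathbb{Z}$, or $a$ is a non-real quadratic integer, in which case $\mathbb{Q}(a) = \mathbb{Q}(i\sqrt{D})$ for a unique squarefree positive integer $D$, and $a \in R_D$.

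Next, I would relate the multipliers of $f$ to powers of $a$. Given a periodic point $z_0$ of $f$ of period $n$ with multiplier $\lambda$, lift to $w_0 \in p^{-1}(z_0)$; the finite, $L$-invariant set $p^{-1}\bigl(\{z_0, f(z_0), \ldots, f^{\circ(n-1)}(z_0)\}\bigr)$ yields $L^{\circ m}(w_0) = w_0$ for some $m$ with $n \mid m$. Since $L$ is affine with derivative $a$ everywhere, the multiplier of $L^{\circ m}$ at $w_0$ is $a^m$. When $p$ is unramified at $w_0$, differentiating $p \circ L^{\circ m} = f^{\circ m} \circ p$ at $w_0$ gives $\lambda^{m/n} = a^m$, so $\lambda = \zeta a^n$ for some root of unity $\zeta$; for ramified $w_0$, local uniformizers give an analogous formula with an additional root-of-unity factor. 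A case-by-case inspection of the Latt\`es types---$(2,2,2,2)$, $(2,3,6)$, $(2,4,4)$, $(3,3,3)$, distinguished by the critical structure of $p$---then confirms that $\zeta$ always lies in $R_D^{\times}$, and hence $\lambda = \zeta a^n \in R_D$.

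For the equivalence, if $f$ is flexible then $a \in \mathbb{Z}$ and $\lambda = \zeta a^n \in \mathbb{Q}$, forcing $\zeta \in \{\pm 1\}$ and $\lambda \in \mathbb{Z}$. Conversely, in the rigid case $a \in R_D \setminus \mathbb{Z}$; for a fixed point, the multiplier $\zeta a$ satisfies $|\zeta a|^2 = \deg f$, so $\zeta a \in \mathbb{Z}$ would force $\deg f$ to be a perfect square and $a$ to be real---a contradiction except in the exceptional cases $D \in \{1, 3\}$, which one treats directly using the extra units $\pm i$, $\pm\omega$. The main obstacle, and the reason the proof is not purely formal, is exactly this control of $\zeta$: its appearance at ramified preimages and its interaction with the small-$D$ lattices (tori with extra automorphisms) both require a careful case analysis driven by the critical structure of $p$, which is the content of \cite[Corollary~3.9 and Lemma~5.6]{M2006}.
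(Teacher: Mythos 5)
Note first that the paper does not prove this proposition: it is quoted directly from Milnor \cite{M2006}, so there is no internal argument to compare against, and your sketch is essentially an outline of Milnor's proof. The first assertion (all multipliers lie in some $R_{D}$) is correctly structured, modulo two repairs. First, $L$ has degree $\lvert a \rvert^{2} \geq 2$ and need not act bijectively on the finite set $p^{-1}(C)$, so a lift $w_{0}$ of $z_{0}$ is a priori only \emph{preperiodic} under $L$; you should pass to the point $f^{\circ k}\left( z_{0} \right)$ of the cycle lying under the periodic part of the $L$-orbit, which is harmless since the multiplier is constant along a cycle. Second, at a point where $p$ has local degree $e > 1$ the identity obtained from local uniformizers is $\lambda^{m/n} = a^{m e}$, so $\lambda = \zeta a^{n e}$: the exponent changes, it is not merely ``an additional root-of-unity factor'' (compare the fixed point of $\Lat_{1 +i, 0}^{\mathbb{Z}[i], 4}$ with multiplier $-4 = (1 +i)^{4}$ in Table~\ref{table:quadLattes}). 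Neither repair harms the conclusion $\lambda \in R_{D}$, because whenever the rotation group is larger than $\lbrace \pm 1 \rbrace$ the lattice is forced to be square or hexagonal, so $a$ and the relevant roots of unity lie together in $R_{1}$ or $R_{3}$.

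The genuine gap is in the converse half of the last assertion. Your premise ``in the rigid case $a \in R_{D} \setminus \mathbb{Z}$'' is false: rigidity means $f$ admits no presentation with $\deg p = 2$ and $a \in \mathbb{Z}$, but there are rigid Latt\`{e}s maps with $a \in \mathbb{Z}$, namely those of orbifold type $(2,4,4)$, $(3,3,3)$ or $(2,3,6)$~-- for instance the degree-$4$ map induced by $z \mapsto 2 z$ on $\mathbb{C}/\mathbb{Z}[i]$ via the order-$4$ quotient. For these your argument (``$\zeta a \in \mathbb{Z}$ would force $a$ real'') proves nothing, since $a$ \emph{is} real; the non-integrality of some multiplier must instead come from exhibiting a periodic orbit whose multiplier $\zeta a^{n e}$ carries a non-real root of unity $\zeta$, and proving that such an orbit always exists is exactly the non-formal content of Milnor's Lemma~5.6 that you defer to rather than prove. (Even in the case $a \notin \mathbb{Z}$, the norm test is inconclusive when $N(a)$ happens to be a perfect square, e.g.\ $a = 2 i$, and the existence of an unramified fixed point needs justification.) The flexible direction also has the logic reversed: you cannot assume $\lambda \in \mathbb{Q}$ in order to force $\zeta = \pm 1$; rather, $\deg p = 2$ forces the deck group to be $\lbrace \pm 1 \rbrace$, whence $\zeta = \pm 1$ and $\lambda = \pm a^{n e} \in \mathbb{Z}$.
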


We are interested in the converse of Proposition~\ref{proposition:specialPC} and Proposition~\ref{proposition:specialL}. In~\cite{M2006}, Milnor conjectured that power maps, Chebyshev maps and flexible Latt\`{e}s maps are the only rational maps whose multipliers are all integers. More generally, we may wonder whether power maps, Chebyshev maps and Latt\`{e}s maps are the only rational maps whose multipliers all lie in the ring of integers of a given imaginary quadratic field. We answer this question in the case of quadratic rational maps.

\begin{theorem}
\label{theorem:main}
Assume that $D$ is a positive squarefree integer and $f \colon \widehat{\mathbb{C}} \rightarrow \widehat{\mathbb{C}}$ is a quadratic rational map whose multiplier at each cycle with period less than or equal to $5$ lies in $R_{D}$. Then $f$ is a power map, a Chebyshev map or a Latt\`{e}s map.
\end{theorem}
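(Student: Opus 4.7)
The plan is to parametrize quadratic rational maps modulo conjugacy by the symmetric functions of the multipliers at their three fixed points, following Milnor, and then to exploit the integrality hypothesis at periods up to $5$. Write $\mu_1, \mu_2, \mu_3$ for the fixed-point multipliers and $\sigma_1, \sigma_2, \sigma_3$ for their elementary symmetric functions. The fixed-point index formula $\sum_{i=1}^{3} \frac{1}{1-\mu_i} = 1$ yields the relation $\sigma_3 = \sigma_1 - 2$, and $(\sigma_1, \sigma_2) \in \mathbb{C}^2$ is then a complete conjugacy invariant of $f$. Since the $\mu_i$ lie in $R_D$, we immediately get $\sigma_1, \sigma_2 \in R_D$.

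For each $n \in \{2, 3, 4, 5\}$, the multipliers at the cycles of exact period $n$ of $f$ are the roots of a polynomial $\Phi_n(\lambda; \sigma_1, \sigma_2) \in \mathbb{Z}[\sigma_1, \sigma_2][\lambda]$ of degree equal to the number of such cycles, namely $1, 2, 3, 6$ respectively. I would compute these multiplier polynomials explicitly---using either a symbolic normal form such as $f_{a,b}(z) = (z^2 + a)/(z^2 + b)$ or Milnor's normal form---and observe that, under the hypothesis, each $\Phi_n$ must split completely over $R_D$. From these splitting conditions I would extract a finite list of candidates for $(\sigma_1, \sigma_2) \in R_D^2$, using two complementary tools: first, $R_D \cap \mathbb{R} = \mathbb{Z}$, so complex conjugation imposes strong arithmetic constraints on those coefficients of the $\Phi_n$ that turn out to be rational; second, archimedean bounds extracted from the explicit form of the $\Phi_n$, which together with the algebraic-integer condition should force $|\sigma_1|$ and $|\sigma_2|$ into a bounded region.

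The final step is to identify the surviving values of $(\sigma_1, \sigma_2)$ with power, Chebyshev, or Latt\`es maps. Up to conjugacy there are only finitely many such in degree $2$: two power maps ($z \mapsto z^{\pm 2}$), one Chebyshev map ($T_2$ and $-T_2$ coincide up to conjugacy for even degree), and finitely many (necessarily rigid, since $2$ is not a square) Latt\`es maps. I would compute each of their $(\sigma_1, \sigma_2)$-coordinates and match them against the finite list from the previous step; Propositions~\ref{proposition:specialPC} and~\ref{proposition:specialL} ensure the reverse inclusion is automatic.

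The hard part will be the middle step, namely carrying out the finite classification of $(\sigma_1, \sigma_2) \in R_D^2$ satisfying the splitting conditions for all four $\Phi_n$. The polynomial $\Phi_5$ has degree $6$ in $\lambda$ with coefficients of substantial degree in $(\sigma_1, \sigma_2)$, and ruling out stray $R_D$-integral solutions may require a careful blend of resultant computations, Galois-theoretic observations, and archimedean bounds. The period $5$ hypothesis is presumably used in an essential way: the looser condition at periods $\leq 4$ would likely admit additional sporadic non-special solutions.
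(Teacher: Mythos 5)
There is a genuine gap at the heart of your plan: the finiteness step. You write that the splitting conditions on the $\Phi_n$, ``together with the algebraic-integer condition,'' should force $(\sigma_1,\sigma_2)$ into a bounded region via ``archimedean bounds extracted from the explicit form of the $\Phi_n$,'' but no such bounds are in evidence, and this is precisely where the real work lies. The paper obtains finiteness by a completely different mechanism: writing $\mu_j = 1-\lambda_j$, the holomorphic fixed-point formula $\sum_{j=1}^{3}\frac{1}{\mu_j}=1$ combined with the elementary lattice fact that $\Re\left(\frac{1}{z}\right) \leq \frac{1}{2}$ for every $z \in R_D \setminus \{0,1\}$ forces, after ordering, $\Re\left(\frac{1}{\mu_3}\right) \geq \frac{1}{3}$ and $\Re\left(\frac{1}{\mu_2}\right) \geq \frac{1}{4}$, hence $N(\mu_2), N(\mu_3)$ bounded and only finitely many triples --- an ``Egyptian fraction in a lattice'' argument. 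You invoke the index formula only to derive $\sigma_3=\sigma_1-2$, which discards exactly the information that makes the problem finite. Only after this reduction does the paper use the explicit $M_3^f, M_4^f, M_5^f$ (e.g.\ irreducibility over $\mathbb{Q}$ in degree $3$, which cannot split in a quadratic ring) to eliminate the surviving non-Latt\`es triples.

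Moreover, this fixed-point argument degenerates precisely when some $\lambda_j \in \{0,1\}$, i.e.\ when $f$ has a superattracting or multiple fixed point. These give genuine one-parameter families ($z\mapsto z^2+c$, and $g_{a,1}$ with a parabolic point), so no finiteness statement about $(\sigma_1,\sigma_2)$ can come for free there; your proposal does not separate out these cases at all. The paper handles them by a Diophantine argument of a different flavor: the quadratic polynomial $M_3$ splits over $R_D$ only if its discriminant is a square in $R_D$, and in the superattracting case one needs \emph{two} discriminants to be squares simultaneously, yielding $(a-b)(a+b)=8$ (resp.\ $(2a-2b+1)(2a+2b+1)=9$ in the parabolic case), which is solved by enumerating divisors of bounded norm in $R_D$; the integrally-closed property of $R_D$ is what lets one pass from ``$\operatorname{disc}$ is a square'' to this factorization. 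Without supplying both the lattice-geometric finiteness argument for the generic case and a substitute for these norm-form computations in the two degenerate families, your outline does not yet constitute a proof strategy that could be carried out.
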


In particular, this proves Milnor's conjecture for quadratic rational maps.

\begin{corollary}
Assume that $f \colon \widehat{\mathbb{C}} \rightarrow \widehat{\mathbb{C}}$ is a quadratic rational map that has only integer multipliers. Then $f$ is either a power map or a Chebyshev map.
\end{corollary}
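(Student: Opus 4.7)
The plan is to deduce the corollary directly from Theorem~\ref{theorem:main} together with Proposition~\ref{proposition:specialL} and the parity remark about flexible Latt\`{e}s maps. The key observation is that $\mathbb{Z} \subset R_{D}$ for every positive squarefree integer $D$, so the hypothesis that $f$ has only integer multipliers is stronger than needed to invoke the main theorem.

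First I would fix any positive squarefree $D$ (for instance $D = 1$) and note that since $\mathbb{Z} \subset R_{D}$, the assumption of the corollary implies that the multiplier of $f$ at every cycle of period at most $5$ lies in $R_{D}$. Applying Theorem~\ref{theorem:main} therefore yields that $f$ is a power map, a Chebyshev map, or a Latt\`{e}s map. It then remains only to rule out the Latt\`{e}s case.

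The second step is to exclude Latt\`{e}s maps of degree $2$. If $f$ were a Latt\`{e}s map, Proposition~\ref{proposition:specialL} would apply: since all multipliers of $f$ are integers, $f$ would have to be a \emph{flexible} Latt\`{e}s map. But the degree of a flexible Latt\`{e}s map is the square of an integer, whereas $\deg f = 2$ is not a perfect square. This contradiction forces $f$ to be either a power map or a Chebyshev map.

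I do not anticipate a substantive obstacle: the entire argument is a short combination of inclusions $\mathbb{Z} \subset R_{D}$, the characterization of integer-multiplier Latt\`{e}s maps as flexible ones, and the numerical fact that $2$ is not a perfect square. The mathematical content rests entirely on Theorem~\ref{theorem:main}; the corollary is essentially a packaging statement.
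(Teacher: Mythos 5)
Your argument is correct and is exactly the intended deduction: apply Theorem~\ref{theorem:main} with any $D$ using $\mathbb{Z} \subset R_{D}$, then exclude the Latt\`{e}s case via Proposition~\ref{proposition:specialL} (integer multipliers force flexibility) and the fact that a flexible Latt\`{e}s map has square degree while $\deg f = 2$ is not a square. Nothing further is needed.
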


We may even extend Milnor's question as follows:

\begin{question}
Assume that $K$ is a number field, $\mathcal{O}_{K}$ is its ring of integers and $f \colon \widehat{\mathbb{C}} \rightarrow \widehat{\mathbb{C}}$ is a rational map of degree $d \geq 2$ whose multipliers all lie in $\mathcal{O}_{K}$~-- or $K$. Is $f$ necessarily a power map, a Chebyshev map or a Latt\`{e}s map?
\end{question}

In~\cite{H2021b}, the author answered this question for certain polynomial maps. More precisely, he proved that every unicritical polynomial map of degree $d \geq 2$ that has only rational multipliers is either a power map or a Chebyshev map. He also proved that every cubic polynomial map with symmetries that has only integer multipliers is either a power map or a Chebyshev map.

In~\cite{EvS2011}, Eremenko and van~Strien studied the rational maps of degree $d \geq 2$ that have only real multipliers: they proved that, if $f \colon \widehat{\mathbb{C}} \rightarrow \widehat{\mathbb{C}}$ is such a map, then either $f$ is a Latt\`{e}s map or its Julia set $\mathcal{J}_{f}$ is contained in a circle; they also gave a description of these maps.

In Section~\ref{section:prelim}, we provide some background about the multiplier polynomials of a rational map, the moduli space of quadratic rational maps and the ring of integers of an imaginary quadratic field.

In Section~\ref{section:proof}, we prove Theorem~\ref{theorem:main}. More precisely, we determine the quadratic rational maps whose multiplier polynomials all split into linear factors in $R_{D}[\lambda]$, with $D$ a given positive squarefree integer. Using the holomorphic fixed-point formula, we are reduced to studying two one-parameter families of rational maps and finitely many other cases. We then examine the multiplier polynomials associated to these two families and to the remaining cases in order to conclude.

\begin{acknowledgments}
The author would like to thank his Ph.D. advisors, Xavier Buff and Jasmin Raissy, for their encouragements.
\end{acknowledgments}

\section{Some preliminaries}
\label{section:prelim}

We shall review here some necessary material for our proof of Theorem~\ref{theorem:main}.

\subsection{Dynatomic polynomials and multiplier polynomials}

First, we present the dynatomic polynomials and the multiplier polynomials associated to a rational map, which are related to its periodic points and its multipliers. In particular, we provide a formula to compute the multiplier polynomials of a rational map, which will be very useful in our proof of Theorem~\ref{theorem:main}. For further information about these polynomials, we refer the reader to~\cite{MS1995}, \cite{S1998} and~\cite[Chapter~4]{S2007}.

Throughout this subsection, we fix an integer $d \geq 2$, which will denote the degree of a rational map. In order to properly take the point $\infty$ into account, we identify the Riemann sphere $\widehat{\mathbb{C}}$ with the complex projective line $\mathbb{P}^{1}(\mathbb{C})$~-- defined as the quotient of $\mathbb{C}^{2} \setminus \lbrace 0 \rbrace$ by the relation of collinearity~-- by the usual biholomorphism $\iota \colon \widehat{\mathbb{C}} \rightarrow \mathbb{P}^{1}(\mathbb{C})$ and its inverse given by \[ \iota(z) = \begin{cases} [z \colon 1] & \text{if } z \in \mathbb{C}\\ [1 \colon 0] & \text{if } z = \infty \end{cases} \quad \text{and} \quad \iota^{-1}\left( [x \colon y] \right) = \begin{cases} \frac{x}{y} & \text{if } y \in \mathbb{C}^{*}\\ \infty & \text{if } y = 0 \end{cases} \, \text{.} \]

Suppose that $f \colon \mathbb{P}^{1}(\mathbb{C}) \rightarrow \mathbb{P}^{1}(\mathbb{C})$ is a rational map of degree $d$. Then there exists a homogeneous polynomial map $F \colon \mathbb{C}^{2} \rightarrow \mathbb{C}^{2}$ that does not vanish on $\mathbb{C}^{2} \setminus \lbrace 0 \rbrace$ and makes the diagram below commute, where $\pi$ denotes the canonical projection. The map $F$ is unique up to multiplication by an element of $\mathbb{C}^{*}$ and is said to be a \emph{homogeneous polynomial lift} of $f$.
\begin{center}
\begin{tikzpicture}
\node (M00) at (0,0) {$\mathbb{C}^{2} \setminus \lbrace 0 \rbrace$};
\node (M01) at (3,0) {$\mathbb{C}^{2} \setminus \lbrace 0 \rbrace$};
\node (M10) at (0,-2) {$\mathbb{P}^{1}(\mathbb{C})$};
\node (M11) at (3,-2) {$\mathbb{P}^{1}(\mathbb{C})$};
\draw[myarrow] (M00) to node[above]{$F$} (M01);
\draw[myarrow] (M10) to node[below]{$f$} (M11);
\draw[myarrow] (M00) to node[left]{$\pi$} (M10);
\draw[myarrow] (M01) to node[right]{$\pi$} (M11);
\end{tikzpicture}
\end{center}

Given a homogeneous polynomial map $F \colon \mathbb{C}^{2} \rightarrow \mathbb{C}^{2}$ of degree $d$ and $n \geq 0$, we denote by $G_{n}^{F}$ and $H_{n}^{F}$ the polynomials in $\mathbb{C}[x, y]$ defined by \[ F^{\circ n}(x, y) = \left( G_{n}^{F}(x, y), H_{n}^{F}(x, y) \right) \, \text{,} \] which are homogeneous of degree $d^{n}$.

Suppose that $f \colon \mathbb{P}^{1}(\mathbb{C}) \rightarrow \mathbb{P}^{1}(\mathbb{C})$ is a rational map of degree $d$ and $F \colon \mathbb{C}^{2} \rightarrow \mathbb{C}^{2}$ is a homogeneous polynomial lift of $f$. Then, for every $n \geq 1$, the roots in $\mathbb{P}^{1}(\mathbb{C})$ of the homogeneous polynomial \[ y G_{n}^{F}(x, y) -x H_{n}^{F}(x, y) \in \mathbb{C}[x, y] \] are precisely the periodic points for $f$ with period dividing $n$. Thus, it is natural to try to factor these polynomials in order to separate their roots according to their periods, and we obtain the result below.

For $n \geq 1$, we define \[ \nu(n) = \sum_{k \mid n} \mu\left( \frac{n}{k} \right) d^{k} \, \text{,} \] where $\mu \colon \mathbb{Z}_{\geq 1} \rightarrow \lbrace -1, 0, 1 \rbrace$ denotes the M\"{o}bius function.

\begin{proposition}[{\cite[Proposition~3.2]{MS1995}}]
Suppose that $F \colon \mathbb{C}^{2} \rightarrow \mathbb{C}^{2}$ is a homogeneous polynomial map of degree $d$ that does not vanish on $\mathbb{C}^{2} \setminus \lbrace 0 \rbrace$. Then there exists a unique sequence $\left( \Phi_{n}^{F} \right)_{n \geq 1}$ of elements of $\mathbb{C}[x, y]$ such that, for every $n \geq 1$, we have \[ y G_{n}^{F}(x, y) -x H_{n}^{F}(x, y) = \prod_{k \mid n} \Phi_{k}^{F}(x, y) \, \text{.} \] Furthermore, for every $n \geq 1$, the polynomial $\Phi_{n}^{F}$ is nonzero and homogeneous and we have \[ \deg \Phi_{n}^{F} = \begin{cases} d +1 & \text{if } n = 1\\ \nu(n) & \text{if } n \geq 2 \end{cases} \, \text{.} \]
\end{proposition}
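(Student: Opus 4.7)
The plan is to introduce $\Psi_n^F(x,y) := yG_n^F(x,y) - xH_n^F(x,y) \in \mathbb{C}[x,y]$, a homogeneous polynomial of degree $d^n+1$, in terms of which the proposition reads $\Psi_n^F = \prod_{k \mid n} \Phi_k^F$. Since $\mathbb{C}[x,y]$ is a unique factorization domain, multiplicative Möbius inversion in the fraction field $\mathbb{C}(x,y)$ forces
\[ \Phi_n^F = \prod_{k \mid n} (\Psi_k^F)^{\mu(n/k)} \, \text{,} \]
so any polynomial solution is unique, and existence reduces to proving that this rational function actually lies in $\mathbb{C}[x,y]$.

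I would carry out existence by strong induction on $n$, setting $\Phi_1^F := \Psi_1^F$ (homogeneous of degree $d+1$) and, for $n \geq 2$, defining $\Phi_n^F := \Psi_n^F \big/ \prod_{k \mid n,\, k < n} \Phi_k^F$. The main obstacle is showing that the denominator divides the numerator in $\mathbb{C}[x,y]$, and I would split this into two ingredients. The first is a key divisibility lemma: if $k \mid n$, then $\Psi_k^F \mid \Psi_n^F$. The idea is to work modulo $\Psi_k^F$ in a suitable localization and exploit homogeneity: where $yG_k^F = xH_k^F$, one has $(G_k^F, H_k^F) = \mu \cdot (x,y)$ for some scalar $\mu$, and iterating the degree-$d^k$ homogeneous map $F^{\circ k}$ a total of $n/k$ times yields $F^{\circ n}(x,y) = \mu^{(d^n-1)/(d^k-1)} (x,y)$, whence $\Psi_n^F \equiv 0$. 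To promote this to a genuine polynomial divisibility, I would use that the common vanishing locus of $G_k^F$ and $H_k^F$ is confined to $\{0\}$ (since $F$ does not vanish on $\mathbb{C}^2 \setminus \{0\}$), which allows one to pass between the localization and $\mathbb{C}[x,y]$ up to a coprime factor. The second ingredient is pairwise coprimality: I would prove inductively that $\Phi_{k_1}^F$ and $\Phi_{k_2}^F$ share no common factor for distinct divisors $k_1, k_2$ of $n$---morally because their zero loci correspond to periodic points of distinct exact periods---which combined with $\Phi_k^F \mid \Psi_k^F \mid \Psi_n^F$ upgrades the individual divisibilities to the joint divisibility $\prod_{k \mid n,\, k < n} \Phi_k^F \mid \Psi_n^F$.

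Once existence is established, homogeneity of $\Phi_n^F$ is immediate from the definition, and non-vanishing follows from $\Psi_n^F \neq 0$. For the degree formula, the definition $\nu(n) = \sum_{k \mid n} \mu(n/k) d^k$ inverts via Möbius to $\sum_{k \mid n} \nu(k) = d^n$, so
\[ \sum_{k \mid n} \deg \Phi_k^F = (d+1) + \sum_{k \mid n,\, k \geq 2} \nu(k) = 1 + \sum_{k \mid n} \nu(k) = d^n + 1 = \deg \Psi_n^F \, \text{,} \]
yielding $\deg \Phi_n^F = \nu(n)$ for $n \geq 2$, as required.
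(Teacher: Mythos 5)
Your uniqueness argument, the divisibility $\Psi_{k}^{F} \mid \Psi_{n}^{F}$ for $k \mid n$, and the degree count are all fine (note that the paper does not prove this proposition itself but cites \cite{MS1995}, so there is no internal proof to compare against). The genuine gap is your second ingredient: the dynatomic polynomials are \emph{not} pairwise coprime in general, so the step upgrading the individual divisibilities $\Phi_{k}^{F} \mid \Psi_{k}^{F} \mid \Psi_{n}^{F}$ to the joint divisibility $\prod_{k \mid n,\, k < n} \Phi_{k}^{F} \mid \Psi_{n}^{F}$ collapses. The zero locus of $\Phi_{n}^{F}$ does not consist only of points of exact period $n$: as Proposition~\ref{proposition:dynaRoots} in this very paper records, $\Phi_{n}^{F}$ also vanishes at periodic points of proper period $k \mid n$ whose multiplier is a primitive $\frac{n}{k}$th root of unity. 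Concretely, for $f(z) = z^{2} - \frac{3}{4}$ the fixed point $-\frac{1}{2}$ has multiplier $-1$ and the period-two cycle has collapsed onto it; in the affine chart one computes $\Phi_{1} = \left( z -\frac{3}{2} \right) \left( z +\frac{1}{2} \right)$ (times the factor at infinity) and $\Phi_{2} = \left( z +\frac{1}{2} \right)^{2}$, so $\gcd\left( \Phi_{1}, \Phi_{2} \right) \neq 1$. Already for $n = 6$ your scheme would require $\Phi_{1} \Phi_{2} \Phi_{3} = \Psi_{2} \Psi_{3} / \Psi_{1}$ to divide $\Psi_{6}$, and nothing in your two ingredients delivers this.

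What the existence proof actually requires is a pointwise inequality on orders of vanishing: setting $a_{k}(P) = \operatorname{ord}_{P} \Psi_{k}^{F}$ for $P \in \mathbb{P}^{1}(\mathbb{C})$, one must show $\sum_{k \mid n} \mu\left( \frac{n}{k} \right) a_{k}(P) \geq 0$ for every $P$. This is the real content of \cite[Proposition~3.2]{MS1995}, and it rests on a local analysis at a periodic point $P$ of exact period $m$ with multiplier $\lambda$: if $\lambda$ is not a root of unity then $a_{mr}(P) = a_{m}(P)$ for all $r \geq 1$, while if $\lambda$ is a primitive $\ell$th root of unity then $a_{mr}(P)$ takes only the two values $a_{m}(P)$ (when $\ell \nmid r$) and $a_{m \ell}(P) > a_{m}(P)$ (when $\ell \mid r$); feeding these two-valued sequences into the M\"{o}bius sum yields the nonnegativity. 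Pairwise coprimality would be a shortcut around this bookkeeping, but it is unavailable because it is false. (A minor point on your first ingredient: the clean way to finish is to note that reducing modulo $\Psi_{k}^{F}$ after inverting $y$ gives $\Psi_{k}^{F} \mid y^{N} \Psi_{n}^{F}$, to repeat the argument with $x$ inverted, and to use that $x$ and $y$ are coprime; the hypothesis that $F$ does not vanish on $\mathbb{C}^{2} \setminus \lbrace 0 \rbrace$ is not the fact doing the work there.)
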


\begin{definition}
Suppose that $F \colon \mathbb{C}^{2} \rightarrow \mathbb{C}^{2}$ is a homogeneous polynomial map of degree $d$ that does not vanish on $\mathbb{C}^{2} \setminus \lbrace 0 \rbrace$. For $n \geq 1$, the polynomial $\Phi_{n}^{F}$ is called the $n$th \emph{dynatomic polynomial} of $F$.
\end{definition}

\begin{remark}
If $F \colon \mathbb{C}^{2} \rightarrow \mathbb{C}^{2}$ is a homogeneous polynomial map of degree $d$ that does not vanish on $\mathbb{C}^{2} \setminus \lbrace 0 \rbrace$, then we have \[ \Phi_{n}^{F}(x, y) = \prod_{k \mid n} \left( y G_{k}^{F}(x, y) -x H_{k}^{F}(x, y) \right)^{\mu\left( \frac{n}{k} \right)} \] for all $n \geq 1$ by the M\"{o}bius inversion formula.
\end{remark}

The following result gives the relation between the periodic points for a rational map and the dynatomic polynomials of its homogeneous polynomial lifts.

\begin{proposition}[{\cite[Proposition~3.2]{MS1995}}]
\label{proposition:dynaRoots}
Assume that $f \colon \mathbb{P}^{1}(\mathbb{C}) \rightarrow \mathbb{P}^{1}(\mathbb{C})$ is a rational map of degree $d$, $F \colon \mathbb{C}^{2} \rightarrow \mathbb{C}^{2}$ is a homogeneous polynomial lift of $f$ and $n \geq 1$. Then $z_{0} \in \mathbb{P}^{1}(\mathbb{C})$ is a root of the polynomial $\Phi_{n}^{F}$ if and only if $z_{0}$ is either a periodic point for $f$ with period $n$ or a periodic point for $f$ with period a proper divisor $k$ of $n$ and multiplier a primitive $\frac{n}{k}$th root of unity.
\end{proposition}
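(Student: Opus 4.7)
My plan is to combine the factorization
\[ yG_n^F(x,y) -xH_n^F(x,y) = \prod_{k \mid n} \Phi_k^F(x,y) \]
from the previous proposition with a local multiplicity computation at each periodic point of $f$, and to extract the vanishing order of $z_0$ on $\Phi_n^F$ via the M\"{o}bius inversion
\[ \Phi_n^F(x,y) = \prod_{k \mid n} \bigl( yG_k^F(x,y) -xH_k^F(x,y) \bigr)^{\mu(n/k)} \, \text{.} \]

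First, a point $z_0 \in \mathbb{P}^{1}(\mathbb{C})$ is a root of $yG_j^F -xH_j^F$ exactly when $z_0$ is a fixed point of $f^{\circ j}$, i.e., a periodic point of $f$ of period dividing $j$; hence non-periodic points are roots of no dynatomic polynomial. I restrict to a $z_0$ of exact period $k$ with multiplier $\lambda$, and denote by $M_j$ the multiplicity of $z_0$ as a root of $yG_j^F -xH_j^F$. In an affine chart near $z_0$ the expansion $f^{\circ k}(z) = z_0 +\lambda(z-z_0) +O\bigl((z-z_0)^2\bigr)$ gives, for $k \mid j$ with $m := j/k$,
\[ f^{\circ j}(z) -z = \bigl(\lambda^m -1\bigr)(z-z_0) +O\bigl((z-z_0)^2\bigr) \, \text{,} \]
so $M_j = 0$ when $k \nmid j$, $M_j = 1$ when $k \mid j$ and $\lambda^{j/k} \neq 1$, and $M_j \geq 2$ when $k \mid j$ and $\lambda^{j/k} = 1$. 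The more delicate refinement I would need is that if $\lambda$ is a primitive $q$-th root of unity and $r+1 \geq 2$ is the multiplicity of $z_0$ as a root of $f^{\circ kq}(z) -z$, then in fact $M_{kqs} = r+1$ for every $s \geq 1$; iterating a parabolic germ with multiplier $1$ preserves its fixed-point multiplicity.

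I then sum $\mu(n/j)\, M_j$ over divisors $j$ of $n$ and split into cases. If $k = n$, only $j = n$ contributes and the multiplicity of $z_0$ in $\Phi_n^F$ equals $M_n \geq 1$, so $z_0$ is a root. If $k \mid n$ with $k < n$ and $\lambda$ is either not a root of unity or is a primitive $q$-th root of unity with $q \nmid n/k$, then $M_{km'} = 1$ for all $m' \mid n/k$, and the classical identity $\sum_{m' \mid n/k} \mu((n/k)/m') = 0$ (valid since $n/k > 1$) yields multiplicity $0$ in $\Phi_n^F$. If instead $\lambda$ is a primitive $q$-th root of unity with $q \mid n/k$, a short M\"{o}bius computation using $M_{km'} = r+1$ when $q \mid m'$ and $M_{km'} = 1$ otherwise yields
\[ \sum_{j :\, k \mid j \mid n} \mu(n/j)\, M_j = r \cdot [n/k = q] \, \text{,} \]
which is positive precisely when $\lambda$ is a primitive $(n/k)$-th root of unity; this is exactly the dichotomy in the statement.

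The main obstacle I foresee is the parabolic input $M_{kqs} = r+1$ for all $s \geq 1$, which is not visible from the linear part of the expansion of $f^{\circ k}$ and genuinely needs the local theory of parabolic fixed points (or, equivalently, a careful formal-power-series argument showing that composing a germ of the form $w \mapsto w +cw^{r+1} +\cdots$ with itself leaves the leading nonlinear term of the same order). Once this is in hand the remainder is a bookkeeping exercise with the M\"{o}bius function, and every case outside the two listed in the statement collapses to multiplicity $0$ in $\Phi_n^F$.
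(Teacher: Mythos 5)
Your argument is correct and is essentially the standard proof of the cited result (the paper gives no proof of its own, deferring to \cite[Proposition~3.2]{MS1995}, whose proof runs exactly along these lines): one computes the order of vanishing of $z_{0}$ on $\Phi_{n}^{F}$ as $\sum_{j \mid n} \mu(n/j) M_{j}$ and evaluates it case by case. The one genuinely nontrivial input, namely that a parabolic germ $g(w) = w +c w^{r+1} +\cdots$ with $c \neq 0$ satisfies $g^{\circ s}(w) = w +s c w^{r+1} +\cdots$, so that $M_{kqs} = r+1$ for all $s \geq 1$, is exactly the point you flag, and your M\"{o}bius bookkeeping yielding multiplicity $r \cdot [n/k = q]$ with $r \geq 1$ is right.
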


Let us now present the multiplier polynomials of a rational map. Suppose that $f \colon \mathbb{P}^{1}(\mathbb{C}) \rightarrow \mathbb{P}^{1}(\mathbb{C})$ is a rational map of degree $d$ and $n \geq 1$. Informally, we want to compute the polynomial \[ \prod_{j = 1}^{r} \left( \lambda -\lambda_{j} \right) \in \mathbb{C}[\lambda] \, \text{,} \] where $\lambda_{1}, \dotsc, \lambda_{r}$ denote the multipliers of $f$ at its periodic points with period $n$. In fact, since $f$ has the same multiplier at each point of a cycle, we want to obtain the $n$th root of this polynomial. Assume that $z_{0} \in \mathbb{P}^{1}(\mathbb{C})$ is a periodic point for $f$ with period $n$ and multiplier $\lambda_{0}$ and $F \colon \mathbb{C}^{2} \rightarrow \mathbb{C}^{2}$ is a homogeneous polynomial lift of $f$. Then there exists a periodic point $\left( x_{0}, y_{0} \right) \in z_{0}$ for $F$ with period $n$, and the eigenvalues of the differential of $F^{\circ n}$ at $\left( x_{0}, y_{0} \right)$ are precisely $d^{n}$ and $\lambda_{0}$. Therefore, considering the trace of the differential of $F^{\circ n}$ at $\left( x_{0}, y_{0} \right)$, we have \[ \lambda_{0} +d^{n} = T_{n}^{F}\left( x_{0}, y_{0} \right) \, \text{,} \] where \[ T_{n}^{F} = \frac{\partial G_{n}^{F}}{\partial x} +\frac{\partial H_{n}^{F}}{\partial y} \in \mathbb{C}[x, y] \, \text{.} \] This discussion leads us to the result below (see~\cite[Chapitre~3]{H2021a}).

\begin{proposition}
\label{proposition:multDef}
Suppose that $f \colon \mathbb{P}^{1}(\mathbb{C}) \rightarrow \mathbb{P}^{1}(\mathbb{C})$ is a rational map of degree $d$, $F \colon \mathbb{C}^{2} \rightarrow \mathbb{C}^{2}$ is a homogeneous polynomial lift of $f$ and $n \geq 1$. Then there exists a unique monic polynomial $M_{n}^{f} \in \mathbb{C}[\lambda]$ such that, for every homogeneous polynomial $P \in \mathbb{C}[x, y]$ of degree $1$, we have \[ \res\left( \Phi_{n}^{F}, P \circ F^{\circ n} \right) M_{n}^{f}(\lambda)^{n} = \res\left( \Phi_{n}^{F}, \left( \lambda +d^{n} \right) P \circ F^{\circ n} -P T_{n}^{F} \right) \, \text{,} \] where $\res$ denotes the homogeneous resultant. Furthermore, $M_{n}^{f}$ depends only on $f$ and we have \[ \deg M_{n}^{f} = \begin{cases} d +1 & \text{if } n = 1\\ \frac{\nu(n)}{n} & \text{if } n \geq 2 \end{cases} \, \text{.} \]
\end{proposition}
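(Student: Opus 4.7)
My plan is to establish the identity by reading both sides as polynomials in $\lambda$ evaluated at the roots of $\Phi_n^F$, and to extract the $n$-th power structure from the constancy of the multiplier along each cycle of $f$. A routine observation is that $(\lambda + d^n) P \circ F^{\circ n} - P \cdot T_n^F$ is homogeneous of degree $d^n$ in $(x, y)$ and affine in $\lambda$, so the Sylvester matrix formulation yields that
\[
R_P(\lambda) := \res\!\left(\Phi_n^F,\, (\lambda + d^n) P \circ F^{\circ n} - P \cdot T_n^F\right)
\]
is a polynomial in $\lambda$ of degree at most $\deg \Phi_n^F$ whose top coefficient in $\lambda$ equals $\res(\Phi_n^F, P \circ F^{\circ n})$. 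Choosing a Zariski-generic linear form $P$ --- one not vanishing at any of the finitely many points $F^{\circ n}(\alpha)$ with $\alpha$ a nonzero root of $\Phi_n^F$ --- I ensure that this top coefficient is nonzero, so the monic quotient $N_P(\lambda) := R_P(\lambda)/\res(\Phi_n^F, P \circ F^{\circ n})$ is a well-defined monic polynomial of degree exactly $\deg \Phi_n^F$.

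Next I would unfold $N_P$ via the product formula for the homogeneous resultant. By Proposition~\ref{proposition:dynaRoots}, every nonzero root $(x_0, y_0)$ of $\Phi_n^F$ projects to a periodic point $z_0$ of $f$ whose period divides $n$; after rescaling I may assume $F^{\circ n}(x_0, y_0) = (x_0, y_0)$. Euler's identity applied to the homogeneous map $F^{\circ n}$ of degree $d^n$ makes $(x_0, y_0)$ an eigenvector of $dF^{\circ n}(x_0, y_0)$ with eigenvalue $d^n$, while the other eigenvalue is exactly the multiplier $\lambda_0$ of $f^{\circ n}$ at $z_0$, so $T_n^F(x_0, y_0) = d^n + \lambda_0$. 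Substituting, the contribution of this root to $N_P$ collapses to
\[
\frac{(\lambda + d^n) P(x_0, y_0) - P(x_0, y_0)(d^n + \lambda_0)}{P(x_0, y_0)} = \lambda - \lambda_0,
\]
manifestly independent of $P$. Because the multiplier of $f^{\circ n}$ is constant along any cycle of $f$, every period-$n$ cycle contributes $n$ identical factors $\lambda - \mu$ to $N_P$, hence an $n$-th power $(\lambda - \mu)^n$; an analogous grouping applies to the parasite cycles --- period-$k$ orbits with $k \mid n$ whose multiplier is a primitive $(n/k)$-th root of unity --- whose $k$ points appear in $\Phi_n^F$ with multiplicity $n/k$ each, giving $n$ identical factors $\lambda - 1$.

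Therefore $N_P(\lambda) = M_n^f(\lambda)^n$ for a monic polynomial $M_n^f \in \mathbb{C}[\lambda]$ independent of $P$, proving the claimed identity for generic $P$; it extends to every $P$ by polynomial continuity in the coefficients of $P$. Uniqueness is immediate from the nonvanishing of $\res(\Phi_n^F, P \circ F^{\circ n})$ for at least one admissible $P$, and the degree formula follows from $\deg N_P = \deg \Phi_n^F$ and the $n$-th-power structure ($\deg M_n^f = (d+1)/1$ for $n = 1$, $\nu(n)/n$ for $n \geq 2$). The main delicate point is verifying that parasite roots of $\Phi_n^F$ appear with multiplicity exactly $n/k$ so that their contributions also fold into $n$-th powers; this is a M\"obius-inversion calculation based on the identity $\prod_{j \mid n}\Phi_j^F = yG_n^F - xH_n^F$ together with the Fatou-coordinate fact that $(f^{\circ k})^{\circ (n/k)}(z) - z$ vanishes generically to order $n/k + 1$ at a parabolic fixed point with multiplier a primitive $(n/k)$-th root of unity. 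A cleaner alternative would be to prove the identity first on the Zariski-open locus of rational maps for which $\Phi_n^F$ has only genuine period-$n$ roots --- where $M_n^f(\lambda) = \prod_{\text{cycles of period } n}(\lambda - \mu)$ is transparent --- and then extend to the full parameter space by polynomial continuity in the coefficients of $f$.
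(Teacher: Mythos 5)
Your argument is essentially the paper's own: the paper only sketches this proposition (the trace computation $T_{n}^{F}\left( x_{0}, y_{0} \right) = \lambda_{0} +d^{n}$ via Euler's identity, in the discussion preceding the statement) and defers the details to~\cite[Chapitre~3]{H2021a}, and your unfolding of the resultant via the product formula, the grouping of the factors $\lambda -\lambda_{0}$ cycle by cycle, and the extension from generic $P$ by polynomial continuity is exactly the intended route. One small correction: a parasitic point of period $k$ with multiplier a primitive $\frac{n}{k}$th root of unity appears in $\Phi_{n}^{F}$ with multiplicity $m \cdot \frac{n}{k}$, where $m \frac{n}{k} +1$ is the vanishing order of $f^{\circ n} -\mathrm{id}$ there (a M\"{o}bius-inversion computation), not ``exactly $\frac{n}{k}$'' as you assert; this does not damage the proof, since the total multiplicity over such a cycle is then $m n$, still divisible by $n$, so the contribution $(\lambda -1)^{m n}$ folds into the $n$th power as required~-- and your fallback of degenerating from the Zariski-open locus where $\Phi_{n}^{F}$ has only genuine period-$n$ roots also covers this cleanly.
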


\begin{definition}
Suppose that $f \colon \mathbb{P}^{1}(\mathbb{C}) \rightarrow \mathbb{P}^{1}(\mathbb{C})$ is a rational map of degree $d$. For $n \geq 1$, the polynomial $M_{n}^{f}$ is called the $n$th \emph{multiplier polynomial} of $f$.
\end{definition}

\begin{remark}
If $F \colon \mathbb{C}^{2} \rightarrow \mathbb{C}^{2}$ is a homogeneous polynomial map of degree $d$ that does not vanish on $\mathbb{C}^{2} \setminus \lbrace 0 \rbrace$, $n \geq 1$ and $P \in \mathbb{C}[x, y]$ is a homogeneous polynomial of degree $e \geq 0$, then we have \[ \res\left( \Phi_{n}^{F}, P \circ F^{\circ n} \right) = \epsilon_{n}^{(e)} \res\left( \Phi_{n}^{F}, P \right) \res(F)^{\frac{e \nu(n) \left( d^{n} -1 \right)}{d (d -1)}} \, \text{,} \] where $\epsilon_{n}^{(e)} \in \lbrace -1, 1 \rbrace$ equals $-1$ if and only if $n = 1$, $d$ is even and $e$ is odd.
\end{remark}

Note that, given a rational map $f \colon \mathbb{P}^{1}(\mathbb{C}) \rightarrow \mathbb{P}^{1}(\mathbb{C})$ of degree $d$, a homogeneous polynomial lift $F \colon \mathbb{C}^{2} \rightarrow \mathbb{C}^{2}$ of $f$ and $n \geq 1$, the formula in Proposition~\ref{proposition:multDef} enables us to compute the polynomial $M_{n}^{f}$ by considering a nonzero homogeneous polynomial $P \in \mathbb{C}[x, y]$ of degree $1$ that does not divide $\Phi_{n}^{F}$.

Let us now describe precisely the relation between the multiplier polynomials of a rational map and its multipliers. Given a rational map $f \colon \mathbb{P}^{1}(\mathbb{C}) \rightarrow \mathbb{P}^{1}(\mathbb{C})$ of degree $d$, a homogeneous polynomial lift $F \colon \mathbb{C}^{2} \rightarrow \mathbb{C}^{2}$ of $f$ and $n \geq 1$, we have \[ M_{n}^{f}(\lambda)^{n} = \prod_{j = 1}^{\deg \Phi_{n}^{F}} \left( \lambda -\lambda_{j} \right) \, \text{,} \] where $\lambda_{1}, \dotsc, \lambda_{\deg \Phi_{n}^{F}}$ are the multipliers of $f^{\circ n}$ at the roots of the polynomial $\Phi_{n}^{F}$ repeated according to their multiplicities. Therefore, we have the result below, which follows immediately from Proposition~\ref{proposition:dynaRoots}.

\begin{proposition}
\label{proposition:multRoots}
Assume that $f \colon \mathbb{P}^{1}(\mathbb{C}) \rightarrow \mathbb{P}^{1}(\mathbb{C})$ is a rational map of degree $d$ and $n \geq 1$. Then $\lambda_{0} \in \mathbb{C}$ is a root of the polynomial $M_{n}^{f}$ if and only if 
\begin{itemize}
\item $\lambda_{0}$ is the multiplier of $f$ at a cycle with period $n$,
\item or $\lambda_{0}$ equals $1$ and $f$ has a cycle with period a proper divisor $k$ of $n$ and multiplier a primitive $\frac{n}{k}$th root of unity.
\end{itemize}
\end{proposition}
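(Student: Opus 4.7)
The plan is to combine the product formula
$$M_n^f(\lambda)^n = \prod_{j=1}^{\deg \Phi_n^F} (\lambda - \lambda_j)$$
stated just before the proposition, where the $\lambda_j$ are the multipliers of $f^{\circ n}$ at the roots of $\Phi_n^F$ in $\mathbb{P}^1(\mathbb{C})$ counted with multiplicity, with the description of the roots of $\Phi_n^F$ provided by Proposition~\ref{proposition:dynaRoots}. Since the polynomials on both sides of the formula have the same roots (with different multiplicities), $\lambda_0$ is a root of $M_n^f$ if and only if there exists a root $z_0$ of $\Phi_n^F$ such that the multiplier of $f^{\circ n}$ at $z_0$ equals $\lambda_0$.

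The forward direction then splits into two cases according to Proposition~\ref{proposition:dynaRoots}. If $z_0$ has exact period $n$ for $f$, then by definition the multiplier of $f^{\circ n}$ at $z_0$ is precisely the multiplier of the cycle of $z_0$, so $\lambda_0$ lies in the first bullet. If instead $z_0$ has exact period $k$ for some proper divisor $k$ of $n$, with multiplier $\mu$ for $f$ a primitive $(n/k)$-th root of unity, then writing $f^{\circ n} = (f^{\circ k})^{\circ (n/k)}$ and applying the chain rule at the periodic point $z_0$ gives multiplier $\mu^{n/k} = 1$ for $f^{\circ n}$ at $z_0$, putting $\lambda_0$ in the second bullet.

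For the converse, I just have to check that each candidate $\lambda_0$ really appears among the $\lambda_j$. A cycle of period $n$ with multiplier $\lambda_0$ contributes $n$ points which are all roots of $\Phi_n^F$ by Proposition~\ref{proposition:dynaRoots}, and at each such point the multiplier of $f^{\circ n}$ is $\lambda_0$; similarly, a cycle of period $k$ (a proper divisor of $n$) with multiplier a primitive $(n/k)$-th root of unity contributes $k$ roots of $\Phi_n^F$ at each of which $f^{\circ n}$ has multiplier $1$ by the chain rule computation above. In both cases $\lambda_0$ appears in the factorization and hence is a root of $M_n^f$.

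There is no substantive obstacle: the proposition is essentially a bookkeeping consequence of the definitions, and the only point that needs a word of care is the chain rule identity $\mu^{n/k} = 1$ in the second case, together with the observation that the set of roots of $M_n^f(\lambda)^n$ coincides with the set of roots of $M_n^f(\lambda)$, so multiplicities in the product formula can safely be ignored for the biconditional.
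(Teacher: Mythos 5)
Your argument is correct and is exactly the paper's: the product formula $M_{n}^{f}(\lambda)^{n} = \prod_{j}\left( \lambda -\lambda_{j} \right)$ combined with Proposition~\ref{proposition:dynaRoots}, with the chain-rule identity $\mu^{n/k} = 1$ supplying the second bullet. The paper simply states that the result ``follows immediately''; you have filled in the same routine details it leaves implicit.
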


A direct consequence of Proposition~\ref{proposition:multRoots} is the result below, which is a key point in our proof of Theorem~\ref{theorem:main}. It states that our problem comes down to determining the quadratic rational maps whose multiplier polynomials all split into linear factors in $R_{D}[\lambda]$, with $D$ a given positive squarefree integer.

\begin{corollary}
\label{corollary:multRoots}
Assume that $R$ is a subring of $\mathbb{C}$, $f \colon \mathbb{P}^{1}(\mathbb{C}) \rightarrow \mathbb{P}^{1}(\mathbb{C})$ is a rational map of degree $d$ and $n \geq 1$. Then the multipliers of $f$ at its cycles with period $n$ all lie in $R$ if and only if the polynomial $M_{n}^{f}$ splits into linear factors in $R[\lambda]$.
\end{corollary}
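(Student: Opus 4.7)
The plan is to deduce Corollary \ref{corollary:multRoots} directly from Proposition \ref{proposition:multRoots}, which describes all roots of $M_{n}^{f}$ in $\mathbb{C}$.

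First I would dispose of the easy direction. Assume that $M_{n}^{f}$ splits into linear factors in $R[\lambda]$, so that every complex root of $M_{n}^{f}$ lies in $R$. By Proposition \ref{proposition:multRoots}, the multiplier of $f$ at any cycle with period $n$ is a root of $M_{n}^{f}$, and therefore lies in $R$.

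For the converse, suppose that the multipliers of $f$ at its cycles with period $n$ all lie in $R$. The polynomial $M_{n}^{f}$ is monic by Proposition \ref{proposition:multDef}, so over $\mathbb{C}$ it factors as $M_{n}^{f}(\lambda) = \prod_{j}\left( \lambda - \lambda_{j} \right)$ for some complex numbers $\lambda_{j}$. By Proposition \ref{proposition:multRoots}, each such root $\lambda_{j}$ is either the multiplier of $f$ at some cycle with period $n$~-- which lies in $R$ by assumption~-- or equals $1$, which lies in $R$ because $R$ is a subring of $\mathbb{C}$ and hence contains the multiplicative identity. Thus every $\lambda_{j}$ lies in $R$, and the factorization $M_{n}^{f}(\lambda) = \prod_{j}\left( \lambda - \lambda_{j} \right)$ is already a splitting in $R[\lambda]$.

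There is really no obstacle here: the content of the corollary is packaged entirely in Proposition \ref{proposition:multRoots}, and the only subtle point is remembering that the possibly spurious root $\lambda_{0} = 1$ automatically belongs to $R$, so that the hypothesis on period-$n$ multipliers suffices to control all roots of $M_{n}^{f}$.
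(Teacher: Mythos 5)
Your proof is correct and matches the paper's intent exactly: the paper gives no explicit argument, stating only that the corollary is a direct consequence of Proposition~\ref{proposition:multRoots}, and your two-directional deduction~-- including the observation that the possible extra root $\lambda_{0} = 1$ lies in $R$ because a subring contains the identity, and that monicity makes the complex factorization already a splitting over $R$~-- is precisely the intended argument.
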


\subsection{The moduli space of quadratic rational maps}

We now recall certain facts about the conjugacy classes of quadratic rational maps.

Suppose that $f \colon \widehat{\mathbb{C}} \rightarrow \widehat{\mathbb{C}}$ is a quadratic rational map, and denote by $\lambda_{1}, \lambda_{2}, \lambda_{3}$ its multipliers at its fixed points repeated according to their multiplicities. If $f$ has only simple fixed points or, equivalently, if $\lambda_{j} \neq 1$ for all $j \in \lbrace 1, 2, 3 \rbrace$, then we have \[ \frac{1}{1 -\lambda_{1}} +\frac{1}{1 -\lambda_{2}} +\frac{1}{1 -\lambda_{3}} = 1 \, \text{.} \] In particular, note that $\lambda_{1} \lambda_{2} = 1$ if and only if $\lambda_{1} = \lambda_{2} = 1$.

Given a quadratic rational map $f \colon \widehat{\mathbb{C}} \rightarrow \widehat{\mathbb{C}}$, we define \[ \sigma_{1}^{f} = \lambda_{1} +\lambda_{2} +\lambda_{3} \, \text{,} \quad \sigma_{2}^{f} = \lambda_{1} \lambda_{2} +\lambda_{1} \lambda_{3} +\lambda_{2} \lambda_{3} \, \text{,} \quad \sigma_{3}^{f} = \lambda_{1} \lambda_{2} \lambda_{3} \] to be the elementary symmetric functions of the multipliers $\lambda_{1}, \lambda_{2}, \lambda_{3}$ of $f$ at its fixed points, so that \[ M_{1}^{f}(\lambda) = \lambda^{3} -\sigma_{1}^{f} \lambda^{2} +\sigma_{2}^{f} \lambda -\sigma_{3}^{f} \, \text{.} \] By the formula above that relates the multipliers of a quadratic rational map at its fixed points, for every quadratic rational map $f \colon \widehat{\mathbb{C}} \rightarrow \widehat{\mathbb{C}}$, we have \[ \sigma_{3}^{f} = \sigma_{1}^{f} -2 \, \text{.} \] In fact, we will see that this relation uniquely determines the conjugacy classes of quadratic rational maps.

We now give normal forms for the conjugacy classes of quadratic rational maps. For $a, b \in \mathbb{C}$ such that $a b \neq 1$, define \[ g_{a, b} \colon z \mapsto \frac{z (z +a)}{b z +1} \, \text{,} \] which fixes $0$ with multiplier $a$ and fixes $\infty$ with multiplier $b$. Define \[ h \colon z \mapsto z +\frac{1}{z} \, \text{,} \] which has $\infty$ as its unique fixed point. We have the following result:

\begin{proposition}[{\cite[Lemma~3.1]{M1993}}]
\label{proposition:quadConj}
Suppose that $f \colon \widehat{\mathbb{C}} \rightarrow \widehat{\mathbb{C}}$ is a quadratic rational map. If $f$ has two distinct fixed points with multipliers $a, b \in \mathbb{C}$, then we have $a b \neq 1$ and $f$ is conjugate to $g_{a, b}$. If $f$ has a unique fixed point, then $f$ is conjugate to $h$.
\end{proposition}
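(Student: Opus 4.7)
The plan is to normalize $f$ by a M\"{o}bius change of coordinates so that its fixed points sit at prescribed locations ($0$ and $\infty$, or just $\infty$), and then read off the coefficients of the resulting rational map from the data of its fixed points and multipliers.

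For the two-fixed-point case, I would take a M\"{o}bius transformation $\phi$ that sends two distinct fixed points of $f$ (with multipliers $a$ and $b$) to $0$ and $\infty$ respectively, and conjugate. The conjugate $\tilde{f} = \phi \circ f \circ \phi^{-1}$ is again a quadratic rational map fixing $0$ and $\infty$. Write $\tilde{f}(z) = P(z)/Q(z)$ with $P, Q$ coprime. From $\tilde{f}(\infty) = \infty$ we get $\deg P > \deg Q$, and from $\deg \tilde{f} = 2$ we get $\deg P = 2$, $\deg Q \leq 1$; from $\tilde{f}(0) = 0$ we get $z \mid P$. Normalizing so that $Q$ has constant term $1$, this forces $\tilde{f}(z) = \frac{z(z+\alpha)}{\beta z + 1}$ for some $\alpha, \beta \in \mathbb{C}$, i.e.\ $\tilde{f} = g_{\alpha,\beta}$. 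A direct derivative calculation shows that the multiplier of $g_{\alpha,\beta}$ at $0$ equals $\alpha$, and passing to the coordinate $w = 1/z$ one finds that the multiplier of $g_{\alpha,\beta}$ at $\infty$ equals $\beta$, forcing $\alpha = a$ and $\beta = b$. Finally, the condition $ab \neq 1$ comes out automatically: if $ab = 1$, the numerator and denominator of $g_{a,b}$ share the root $z = -a$ and $g_{a,b}$ collapses to the linear map $z \mapsto az$, contradicting $\deg f = 2$.

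For the unique-fixed-point case, the three fixed points of $f$ (counted with multiplicity in the fixed-point equation) must coincide, so the single fixed point has multiplicity $3$ as a root of the appropriate fixed-point polynomial; since a fixed point of multiplier $\neq 1$ is simple, the multiplier must equal $1$. Conjugating so that this fixed point lies at $\infty$, I write $\tilde{f}(z) = P(z)/Q(z)$ with $P$ of degree $2$ and $\deg Q < 2$. The case $\deg Q = 0$ is ruled out because a quadratic polynomial always has a finite fixed point. So $\tilde{f}(z) = \frac{z^2 + bz + c}{dz + e}$ with $d \neq 0$; normalizing and computing the multiplier at $\infty$ (via $w = 1/z$) forces $d = 1$, and the requirement that no finite point is fixed, read off from $\tilde{f}(z) - z = \frac{(b-e)z + c}{z+e}$, forces $b = e$ and $c \neq 0$. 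This gives $\tilde{f}(z) = z + \frac{c}{z+b}$; a translation $z \mapsto z - b$ puts this in the form $z + c/z$, and a scaling $z \mapsto z/\sqrt{c}$ puts it in the form $z + 1/z = h(z)$.

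The only mildly delicate step is justifying that the unique fixed point in Case~2 must have multiplier $1$; everything else is a coordinate change followed by identification of coefficients. No significant obstacle is expected, and the computations of the multipliers of $g_{a,b}$ at $0$ and $\infty$ and of the rescaling in Case~2 are routine.
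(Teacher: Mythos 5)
Your argument is correct, and it is essentially the standard normalization proof: the paper itself gives no proof of this proposition, citing Milnor's Lemma~3.1, and your route (move the fixed points to $0$ and $\infty$, or the unique fixed point to $\infty$, then read off coefficients) is the same one used there. One small step to make explicit in the two-fixed-point case: after clearing the constant term of the denominator you only get $\tilde{f}(z) = \frac{\gamma z (z +\alpha)}{\beta z +1}$ with a leading coefficient $\gamma \neq 0$ that need not equal $1$, so you need one further conjugation by the scaling $z \mapsto \gamma z$ (which preserves the fixed points $0$ and $\infty$) before the map has the exact form $g_{\alpha, \beta}$; the multipliers are conjugation-invariant, so the identification $\alpha = a$, $\beta = b$ then goes through as you say. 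Everything else is sound, including the coprimality argument giving $a b \neq 1$ and the multiplicity-three argument forcing the multiplier at a unique fixed point to be $1$.
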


We will also use another normal form. For $c \in \mathbb{C}$, define \[ f_{c} \colon z \mapsto z^{2} +c \, \text{.} \] For every $c \in \mathbb{C}$, the map $f_{c}$ has $\infty$ as a superattracting fixed point. Furthermore, if $f \colon \widehat{\mathbb{C}} \rightarrow \widehat{\mathbb{C}}$ is a quadratic rational map that has a superattracting fixed point, then there exists a unique parameter $c \in \mathbb{C}$ such that $f$ is conjugate to $f_{c}$. Note that, for every $c \in \mathbb{C}$, the map $f_{c}$ is a power map if and only if $c = 0$ and is a Chebyshev map if and only if $c = -2$.

Define $\mathcal{M}_{2}(\mathbb{C})$ to be the set of conjugacy classes of quadratic rational maps. Given a quadratic rational map $f \colon \widehat{\mathbb{C}} \rightarrow \widehat{\mathbb{C}}$, denote by $[f] \in \mathcal{M}_{2}(\mathbb{C})$ its conjugacy class. We have the result below, which follows directly from Proposition~\ref{proposition:quadConj}.

\begin{corollary}[{\cite[Lemma~3.1]{M1993}}]
\label{corollary:quadConj}
The map $\Mult_{2}^{(1)} \colon \mathcal{M}_{2}(\mathbb{C}) \rightarrow \mathbb{C}^{2}$ given by \[ \Mult_{2}^{(1)}\left( [f] \right) = \left( \sigma_{1}^{f}, \sigma_{2}^{f} \right) \] is well defined and bijective. In particular, the conjugacy class of a quadratic rational map $f \colon \widehat{\mathbb{C}} \rightarrow \widehat{\mathbb{C}}$ is characterized by its multipliers $\lambda_{1}, \lambda_{2}, \lambda_{3}$ at its fixed points.
\end{corollary}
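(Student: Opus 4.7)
The plan is to combine Proposition~\ref{proposition:quadConj} with the identity $\sigma_3^f = \sigma_1^f -2$ recalled just above the statement. Well-definedness is immediate from the observation that a M\"obius conjugacy $\phi$ between $f$ and $g$ induces a bijection between their fixed points preserving multipliers; hence the multiset $\{\lambda_1, \lambda_2, \lambda_3\}$ of multipliers at fixed points (with multiplicity) is a conjugacy invariant, and so are its elementary symmetric functions $\sigma_1^f$ and $\sigma_2^f$.

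For bijectivity I would first note that the constraint $\sigma_3^f = \sigma_1^f -2$ recovers the third symmetric function from the first, so the pair $(\sigma_1^f, \sigma_2^f)$ determines all three elementary symmetric functions of the multipliers at fixed points, and hence the multiset itself as the multiset of roots of $M_1^f$. It then suffices to show (i) that this multiset determines $[f]$ and (ii) that every triple $\{\mu_1, \mu_2, \mu_3\}$ satisfying $\mu_1 \mu_2 \mu_3 = \mu_1 +\mu_2 +\mu_3 -2$ is realized by some quadratic rational map.

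For (i), I would split into three cases according to the number of distinct fixed points of $f$. When $f$ has three simple fixed points, all three multipliers differ from $1$, and the equivalence $\lambda_1 \lambda_2 = 1 \Leftrightarrow \lambda_1 = \lambda_2 = 1$ recalled before the corollary forces $\lambda_1 \lambda_2 \neq 1$; by Proposition~\ref{proposition:quadConj} we then have $[f] = [g_{\lambda_1, \lambda_2}]$, which depends only on the multiset. When $f$ has exactly two distinct fixed points, the double root must have multiplier $1$, so the multiset reads $\{1, 1, \lambda\}$ with $\lambda \neq 1$ and $[f] = [g_{1, \lambda}]$. When $f$ has a unique fixed point, Proposition~\ref{proposition:quadConj} yields $[f] = [h]$, whose multiplier multiset is $\{1,1,1\}$. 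For (ii), given $(s_1, s_2) \in \mathbb{C}^{2}$ I would set $s_3 = s_1 -2$, factor $\lambda^{3} -s_{1} \lambda^{2} +s_{2} \lambda -s_{3}$ as $\prod_j (\lambda -\mu_j)$, and build a preimage: $g_{\mu_1, \mu_2}$ in the generic case (with $\mu_1 \mu_2 \neq 1$ by the same equivalence), $g_{1, \mu}$ in the $\{1,1,\mu\}$ case, and $h$ in the $\{1,1,1\}$ case. The third multiplier of $g_{\mu_1, \mu_2}$ is then forced by the constraint $\sigma_3 = \sigma_1 -2$ to coincide with $\mu_3$.

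The main obstacle, in my view, is the careful bookkeeping in the parabolic degenerate cases: one has to verify directly that $h$ has $\infty$ as a triple fixed point with multiplier $1$, and that $g_{1, \lambda}$ has multipliers $\{1, 1, \lambda\}$ (with a double fixed point at the origin). Both reductions are short local computations, and once they are in place the corollary follows from Proposition~\ref{proposition:quadConj} as a direct translation.
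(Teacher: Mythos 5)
Your proposal is correct and follows the same route the paper intends: the paper offers no written proof, merely asserting that the corollary "follows directly from Proposition~\ref{proposition:quadConj}" together with the relation $\sigma_{3}^{f} = \sigma_{1}^{f} -2$, and your argument is exactly that deduction carried out in full (including the correct observation that the relation forces $\mu_{1} \mu_{2} = 1$ only when $\mu_{1} = \mu_{2} = 1$, and the local checks at the parabolic normal forms $g_{1, \lambda}$ and $h$).
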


By Corollary~\ref{corollary:quadConj} and the invariance of the multiplier under conjugacy, the multiplier polynomials of $f$, with $f \colon \widehat{\mathbb{C}} \rightarrow \widehat{\mathbb{C}}$ a quadratic rational map, depend only on $\sigma_{1}^{f}$ and $\sigma_{2}^{f}$. More precisely, we have the following result:

\begin{proposition}[{\cite[Corollary~5.2]{S1998}}]
\label{proposition:quadMult}
Assume that $n \geq 1$. Then the coefficients of the polynomial $M_{n}^{f}$, with $f \colon \widehat{\mathbb{C}} \rightarrow \widehat{\mathbb{C}}$ a quadratic rational map, are polynomials in $\sigma_{1}^{f}$ and $\sigma_{2}^{f}$ with integer coefficients~-- which are independent of $f$.
\end{proposition}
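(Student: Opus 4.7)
The plan is to exploit three ingredients: the conjugacy invariance of $M_n^f$, the fact that $(\sigma_1^f,\sigma_2^f)$ parametrizes $\mathcal{M}_2(\mathbb{C})$ (Corollary~\ref{corollary:quadConj}), and the explicit resultant formula from Proposition~\ref{proposition:multDef} applied to a universal family whose lift has integer coefficients.

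First I would observe that $M_n^f$ depends only on the conjugacy class $[f]\in\mathcal{M}_2(\mathbb{C})$, since multipliers at periodic points are invariant under M\"obius conjugation. Combined with Corollary~\ref{corollary:quadConj}, each coefficient of $M_n^f$ is thereby a well-defined function of $(\sigma_1^f,\sigma_2^f)\in\mathbb{C}^2$. It then suffices to parametrize a Zariski-dense subset of $\mathcal{M}_2(\mathbb{C})$ by an explicit algebraic family and check along this family that each coefficient is a polynomial in $(\sigma_1^f,\sigma_2^f)$ with integer coefficients.

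For the family I would use the Milnor normal form $g_{a,b}$ from Proposition~\ref{proposition:quadConj}, which accounts for every conjugacy class with two distinct fixed points and in particular for a Zariski-dense open subset of $\mathcal{M}_2(\mathbb{C})$. The natural homogeneous lift $F_{a,b}(x,y)=(x^2+axy,\,bxy+y^2)$ lies in $\mathbb{Z}[a,b][x,y]^2$, so every iterate $F_{a,b}^{\circ n}$ and every polynomial $yG_n^{F_{a,b}}-xH_n^{F_{a,b}}$ also lies in $\mathbb{Z}[a,b][x,y]$. An induction on $n$, based on the uniqueness clause in the proposition defining $\Phi_n^F$ and on the fact that each $\Phi_k^{F_{a,b}}$ for $k\mid n$, $k<n$, is an exact factor, shows that $\Phi_n^{F_{a,b}}$ itself lies in $\mathbb{Z}[a,b][x,y]$. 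Choosing $P\in\{x,y\}$ not dividing $\Phi_n^{F_{a,b}}$ for generic $(a,b)$, the two homogeneous resultants appearing in Proposition~\ref{proposition:multDef} both lie in $\mathbb{Z}[a,b,\lambda]$, and their quotient equals $M_n^{g_{a,b}}(\lambda)^n$.

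I would then conclude by two integral-closure arguments. First, the coefficients of the monic polynomial $M_n^{g_{a,b}}$ are elements of $\mathbb{Q}(a,b)$ whose $n$-th powers occur as coefficients of a polynomial in $\mathbb{Z}[a,b][\lambda]$; since the UFD $\mathbb{Z}[a,b]$ is integrally closed in $\mathbb{Q}(a,b)$, they themselves lie in $\mathbb{Z}[a,b]$. Second, these coefficients factor through the finite surjective morphism $(a,b)\mapsto(\sigma_1^{g_{a,b}},\sigma_2^{g_{a,b}})$, so as invariants of that morphism they are integral over $\mathbb{Z}[\sigma_1,\sigma_2]$ while lying in $\mathbb{Q}(\sigma_1,\sigma_2)$; the integral closedness of $\mathbb{Z}[\sigma_1,\sigma_2]$ in $\mathbb{Q}(\sigma_1,\sigma_2)$ forces them into $\mathbb{Z}[\sigma_1,\sigma_2]$. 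The resulting polynomial identity in $(\sigma_1,\sigma_2)$ extends from the open stratum to the single leftover conjugacy class $[h]$ by continuity on the irreducible variety $\mathcal{M}_2(\mathbb{C})\cong\mathbb{A}^2$. The main obstacle I expect is the middle step: verifying carefully that extraction of the $n$-th root and passage to the quotient $\mathcal{M}_2(\mathbb{C})$ both preserve integrality, rather than only rationality, of the coefficients.
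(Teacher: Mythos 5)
The paper does not prove this proposition; it is quoted from Silverman \cite{S1998}, so I am judging your argument on its own terms. Its overall architecture (work with the normal form $g_{a,b}$, whose lift $F_{a,b}(x,y)=(x^{2}+axy,\,bxy+y^{2})$ has coefficients in $\mathbb{Z}[a,b]$, push integrality through the dynatomic polynomials and the resultant formula, then descend to $(\sigma_{1},\sigma_{2})$) is the standard one, and the parts about $\Phi_{n}^{F_{a,b}}\in\mathbb{Z}[a,b][x,y]$, the Galois argument for extracting the $n$-th root inside $\mathbb{Q}(a,b)[\lambda]$, and the extension by continuity to the single class $[h]$ are all fine. The problem is your ``first integral-closure argument'': its conclusion is false. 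Already for $n=1$ one has
\[ M_{1}^{g_{a,b}}(\lambda)=(\lambda-a)(\lambda-b)\left(\lambda-\tfrac{a+b-2}{ab-1}\right), \]
whose coefficients do not lie in $\mathbb{Z}[a,b]$. The mechanism is visible in the resultant formula: the leading coefficient of $\res\bigl(\Phi_{n}^{F},(\lambda+d^{n})P\circ F^{\circ n}-PT_{n}^{F}\bigr)$ in $\lambda$ is $\res\bigl(\Phi_{n}^{F},P\circ F^{\circ n}\bigr)$, which contains a positive power of $\res(F_{a,b})=\pm(1-ab)$ and is therefore not a unit of $\mathbb{Z}[a,b]$; so knowing that $r\,M_{n}^{n}\in\mathbb{Z}[a,b][\lambda]$ only places the roots of $M_{n}$ integrally over $\mathbb{Z}[a,b][1/r]$, not over $\mathbb{Z}[a,b]$. (Also, as a side slip, the coefficients of $M_{n}^{n}$ are not the $n$-th powers of the coefficients of $M_{n}$, so even the premise of that step is misstated; the correct route is through the roots, which is exactly where the non-unit $r$ blocks you.)

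This gap is not cosmetic, because it propagates into your second step: you deduce integrality over $\mathbb{Z}[\sigma_{1},\sigma_{2}]$ from membership in $\mathbb{Z}[a,b]$, and with that membership gone you have no source of integrality at all. The true statement is that the coefficients, which a priori live in $\mathbb{Z}[a,b][\tfrac{1}{1-ab}]$ and are invariant under the $S_{3}$-relabeling of the fixed points, land in $\mathbb{Z}[\sigma_{1},\sigma_{2}]$ --- and the denominators $1-ab$, $1-bc$, $1-ca$ genuinely cancel only after symmetrization. Establishing that cancellation over $\mathbb{Z}$ (equivalently, that the ring of $\mathrm{PGL}_{2}$-invariant regular functions on $\mathrm{Rat}_{2}$ over $\mathbb{Z}$ is exactly $\mathbb{Z}[\sigma_{1},\sigma_{2}]$, i.e.\ Silverman's $\mathcal{M}_{2}\cong\mathbb{A}^{2}_{\mathbb{Z}}$ as a scheme over $\mathbb{Z}$) is precisely the nontrivial content of the cited Corollary~5.2; the set-theoretic bijection of Corollary~\ref{corollary:quadConj} together with abstract integral closedness of $\mathbb{Z}[\sigma_{1},\sigma_{2}]$ does not substitute for it. You correctly flagged this middle step as the likely obstacle, but the argument you offer for it does not close the gap.
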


\begin{remark}
If $f$ and $\overline{f}$ are quadratic rational maps with multipliers $\lambda_{1}, \lambda_{2}, \lambda_{3}$ and $\overline{\lambda_{1}}, \overline{\lambda_{2}}, \overline{\lambda_{3}}$ at their fixed points, then $M_{n}^{\overline{f}} = \overline{M_{n}^{f}}$ for all $n \geq 1$ by Proposition~\ref{proposition:quadMult}.
\end{remark}

Using the software SageMath, we can compute the first multiplier polynomials of $g_{a, b}$, with $a, b \in \mathbb{C}$ such that $a b \neq 1$. Thus, we can express the first multiplier polynomials of a quadratic rational map $f \colon \widehat{\mathbb{C}} \rightarrow \widehat{\mathbb{C}}$ in terms of $\sigma_{1}^{f}$ and $\sigma_{2}^{f}$.

\begin{example}
Suppose that $f \colon \widehat{\mathbb{C}} \rightarrow \widehat{\mathbb{C}}$ is a quadratic rational map. For simplicity, set $\sigma_{1} = \sigma_{1}^{f}$ and $\sigma_{2} = \sigma_{2}^{f}$, so that \[ M_{1}^{f}(\lambda) = \lambda^{3} -\sigma_{1} \lambda^{2} +\sigma_{2} \lambda -\left( \sigma_{1} -2 \right) \, \text{.} \] For $n \geq 1$, write \[ M_{n}^{f}(\lambda) = \lambda^{\deg M_{n}^{f}} +\sum_{j = 1}^{\deg M_{n}^{f}} (-1)^{j} \sigma_{j}^{(n)} \lambda^{\deg M_{n}^{f} -j} \, \text{.} \] Then we have 
\begin{align*}
\sigma_{1}^{(2)} & = 2 \sigma_{1} +\sigma_{2} \, \text{,}\\
\sigma_{1}^{(3)} & = \sigma_{1} \left( 2 \sigma_{1} +\sigma_{2} \right) +3 \sigma_{1} +2 \, \text{,}\\
\sigma_{2}^{(3)} & = \left( 2 \sigma_{1} +\sigma_{2} \right) \left( \sigma_{1} +\sigma_{2} \right)^{2} -\sigma_{1} \left( \sigma_{1} +2 \sigma_{2} \right) +12 \sigma_{1} +28 \, \text{,}\\
\sigma_{1}^{(4)} & = \left( 2 \sigma_{1} +\sigma_{2} \right) \sigma_{1}^{2} +\left( \sigma_{1} -\sigma_{2} \right) \left( 3 \sigma_{1} +\sigma_{2} \right) +10 \sigma_{1} \, \text{,}\\
\begin{split}
\sigma_{2}^{(4)} & = \left( 2 \sigma_{1} +\sigma_{2} \right) \sigma_{1}^{2} \left( \sigma_{1} +\sigma_{2} \right)^{2} +\left( \sigma_{1} -\sigma_{2} \right) \left( 7 \sigma_{1}^{3} +9 \sigma_{1}^{2} \sigma_{2} +5 \sigma_{1} \sigma_{2}^{2} +\sigma_{2}^{3} \right)\\
& \quad +\left( 26 \sigma_{1} -\sigma_{2} \right) \sigma_{1}^{2} +4 \sigma_{1} \left( 16 \sigma_{1} -5 \sigma_{2} \right) +4 \left( 10 \sigma_{1} -13 \sigma_{2} \right) +48 \, \text{,}
\end{split}\\
\begin{split}
\sigma_{3}^{(4)} & = \sigma_{2}^{2} \left( \sigma_{1} +\sigma_{2} \right)^{2} \left( 2 \sigma_{1} +\sigma_{2} \right)^{2} +\sigma_{1} \left( 2 \sigma_{1} +\sigma_{2} \right) \left( \sigma_{1}^{3} -2 \sigma_{1}^{2} \sigma_{2} -\sigma_{1} \sigma_{2}^{2} -2 \sigma_{2}^{3} \right)\\
& \quad +\sigma_{1} \left( 27 \sigma_{1}^{3} +30 \sigma_{1}^{2} \sigma_{2} +68 \sigma_{1} \sigma_{2}^{2} +28 \sigma_{2}^{3} \right) +4 \left( 26 \sigma_{1}^{3} +\sigma_{1}^{2} \sigma_{2} +32 \sigma_{1} \sigma_{2}^{2} +15 \sigma_{2}^{3} \right)\\
& \quad +8 \left( 37 \sigma_{1}^{2} -19 \sigma_{1} \sigma_{2} -6 \sigma_{2}^{2} \right) +32 \left( 20 \sigma_{1} +3 \sigma_{2} \right) +304 \, \text{.}
\end{split}
\end{align*}
\end{example}

Finally, let us describe the conjugacy classes of Latt\`{e}s maps of degree $2$. Suppose that $\Lambda$ is a lattice in $\mathbb{C}$, and set $\mathbb{T} = \mathbb{C}/\Lambda$. Recall that the Weierstrass's function $\wp_{\Lambda} \colon \mathbb{T} \rightarrow \widehat{\mathbb{C}}$ given by \[ \wp_{\Lambda}(z +\Lambda) = \frac{1}{z^{2}} +\sum_{w \in \Lambda \setminus \lbrace 0 \rbrace} \left( \frac{1}{(z -w)^{2}} -\frac{1}{w^{2}} \right) \] is well defined, even and holomorphic of degree $2$. Therefore, for all $a, b \in \mathbb{C}$ such that $a \Lambda \subset \Lambda$ and $2 b \in \Lambda$, there exists a unique rational map $\Lat_{a, b}^{\Lambda, 2} \colon \widehat{\mathbb{C}} \rightarrow \widehat{\mathbb{C}}$ of degree $\lvert a \rvert^{2}$ such that \[ \Lat_{a, b}^{\Lambda, 2} \circ \wp_{\Lambda} = \wp_{\Lambda} \circ L_{a, b}^{\Lambda} \, \text{,} \quad \text{where} \quad L_{a, b}^{\Lambda} \colon z +\Lambda \mapsto a z +b +\Lambda \, \text{,} \] since $L_{a, b}^{\Lambda}$ commutes with the multiplication by $-1$ in $\mathbb{T}$.

Note that certain lattices in $\mathbb{C}$ are invariant by nontrivial rotations about the origin, which gives rise to other Latt\`{e}s maps. Suppose that $\Lambda = \mathbb{Z}[i]$ and $\mathbb{T} = \mathbb{C}/\Lambda$. Then, for every $z \in \mathbb{C}$, we have \[ \wp_{\Lambda}(i z +\Lambda) = -\wp_{\Lambda}(z +\Lambda) \, \text{.} \] Therefore, for all $a, b \in \mathbb{C}$ such that $a \in \Lambda$ and $(1 +i) b \in \Lambda$, there exists a unique rational map $\Lat_{a, b}^{\Lambda, 4} \colon \widehat{\mathbb{C}} \rightarrow \widehat{\mathbb{C}}$ of degree $\lvert a \rvert^{2}$ such that \[ \Lat_{a, b}^{\Lambda, 4} \circ \wp_{\Lambda}^{2} = \wp_{\Lambda}^{2} \circ L_{a, b}^{\Lambda} \] since $L_{a, b}^{\Lambda}$ commutes with the multiplication by $i$ in $\mathbb{T}$.

We can now explicit the Latt\`{e}s maps of degree $2$ up to conjugacy.

\begin{proposition}[{\cite[Subsection~8.1]{M2006}}]
\label{proposition:quadLattes}
Assume that $f \colon \widehat{\mathbb{C}} \rightarrow \widehat{\mathbb{C}}$ is a Latt\`{e}s map of degree $2$. Then $f$ is conjugate to either $\Lat_{1 +i, 0}^{\mathbb{Z}[i], 4}$ or $\Lat_{a, b}^{\Lambda, 2}$, with \[ \Lambda \in \left\lbrace \mathbb{Z}[i], \mathbb{Z}\left[ i \sqrt{2} \right], \mathbb{Z}\left[ \frac{1 +i \sqrt{7}}{2} \right] \right\rbrace \] and \[ (a, b) \in \begin{cases} \left\lbrace (1 -i, 0), (1 +i, 0) \right\rbrace & \text{if } \Lambda = \mathbb{Z}[i]\\ \left\lbrace \left( i \sqrt{2}, 0 \right) \right\rbrace & \text{if } \Lambda = \mathbb{Z}\left[ i \sqrt{2} \right]\\ \left\lbrace \left( \frac{1 -i \sqrt{7}}{2}, 0 \right), \left( \frac{1 -i \sqrt{7}}{2}, \frac{1}{2} \right), \left( \frac{1 +i \sqrt{7}}{2}, 0 \right), \left( \frac{1 +i \sqrt{7}}{2}, \frac{1}{2} \right) \right\rbrace & \text{if } \Lambda = \mathbb{Z}\left[ \frac{1 +i \sqrt{7}}{2} \right] \end{cases} \, \text{.} \]
\end{proposition}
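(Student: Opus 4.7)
My plan is to follow the classical approach: reduce a Latt\`{e}s map of degree $2$ to a linear endomorphism of a torus modulo a finite automorphism group, and then enumerate the surviving possibilities.

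Given a Latt\`{e}s map $f$ of degree $2$ with defining data $(\Lambda, L = L_{a,b}^{\Lambda}, p)$, the starting point is that $\lvert a \rvert^{2} = \deg L = \deg f = 2$, so $a$ is a non-real element of $\operatorname{End}(\Lambda)$ of norm $2$. This forces $a$ to be an algebraic integer of norm $2$ in some imaginary quadratic field $K = \mathbb{Q}(a)$, and $\Lambda$ (up to complex homothety) to be a proper $\mathbb{Z}[a]$-ideal. Writing $K = \mathbb{Q}\left( i\sqrt{D} \right)$ for a positive squarefree integer $D$, a direct check on the norm form of $\mathcal{O}_{K}$ shows that only $D \in \lbrace 1, 2, 7 \rbrace$ yield an algebraic integer of norm $2$ (for other $D$ the prime $2$ is inert). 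Since each of these fields has class number $1$, after rescaling $\Lambda$ we may take $\Lambda = \mathcal{O}_{K}$, giving the three lattices listed in the proposition.

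Next I would analyze the quotient map $p$. Up to post-composition by a M\"{o}bius transformation, $p$ is the quotient of $\mathbb{T}$ by a finite cyclic group $G$ of automorphisms of $\mathbb{T}$ fixing a base point, and the relation $f \circ p = p \circ L$ forces $G$ to commute with $L$ modulo translations. The possible orders of $G$ are $2$, $3$, $4$, $6$, but orders $3$ and $6$ require $\Lambda$ to be proportional to $\mathbb{Z}[\omega]$, whose endomorphism ring contains no element of norm $2$. The surviving cases are therefore $G = \mathbb{Z}/2$ with any of the three lattices, where $p$ is $\wp_{\Lambda}$ up to a M\"{o}bius change of coordinates (producing $\Lat_{a,b}^{\Lambda,2}$), and $G = \mathbb{Z}/4$ with $\Lambda = \mathbb{Z}[i]$, where $p$ is $\wp_{\Lambda}^{2}$ up to a M\"{o}bius change of coordinates (producing $\Lat_{a,b}^{\mathbb{Z}[i],4}$).

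It remains to enumerate, for each pair $(\Lambda, G)$, the admissible $(a, b)$ modulo the equivalences that preserve the conjugacy class of $f$: the sign $a \sim -a$ (absorbed by $[-1] \in G$), the action of the units of $\operatorname{End}(\Lambda)$, and conjugation by translations of the torus, which acts on $b$ by $b \mapsto b +(a -1)c$ for $c$ running over a suitable set of half-periods. For each of the four pairs $(\Lambda, G)$ I would list the $a$ with $\lvert a \rvert^{2} = 2$ and $a\Lambda \subset \Lambda$ modulo these symmetries, and then the $b$ with $2 b \in \Lambda$ (respectively $(1 +i) b \in \Lambda$) modulo the translation equivalence. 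The main obstacle is this last bookkeeping step: in the $\mathbb{Q}\left( i\sqrt{7} \right)$ case two inequivalent classes of $b$ survive (yielding $b \in \lbrace 0, \tfrac{1}{2} \rbrace$ in the statement), while in each of the other cases a careful check must confirm that only $b = 0$ remains. A final verification that each listed normal form does define a Latt\`{e}s map of degree $2$ then completes the proof.
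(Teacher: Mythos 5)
The paper offers no proof of this proposition---it is quoted directly from Milnor's survey on Latt\`{e}s maps---and your outline reproduces exactly the classification argument carried out there: descend to a torus endomorphism of norm $2$, restrict to the three lattices whose endomorphism rings contain one, exclude the rotation groups of orders $3$ and $6$, and enumerate $(a,b)$ modulo the unit and translation symmetries (where the parity of $N(a-1)$ governs whether a nonzero class of $b$ survives). Your plan is correct; the only slip is the parenthetical claim that ``the prime $2$ is inert'' for the excluded discriminants (for $D=5$, for instance, $2$ ramifies but the prime above it is non-principal), though the norm-form computation you cite as the primary argument settles this correctly anyway.
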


We can compute the multipliers of the Latt\`{e}s maps appearing in Proposition~\ref{proposition:quadLattes} at their fixed points (see Table~\ref{table:quadLattes}). Thus, we have the result below, which follows immediately from Corollary~\ref{corollary:quadConj} and Proposition~\ref{proposition:quadLattes} and gives a characterization of the Latt\`{e}s maps of degree $2$.

\begin{table}
\caption{Multipliers $\lambda_{1}, \lambda_{2}, \lambda_{3}$ of $\Lat_{a, b}^{\Lambda, n}$ at its fixed points.}
\label{table:quadLattes}
$\begin{array}{|c|c|c|c|c|}
\hline
\Lambda & n & a & b & \lambda_{1}, \lambda_{2}, \lambda_{3}\\
\hline
\hline
\mathbb{Z}[i] & 2 & 1 -i & 0 & -1 +i, -1 +i, -2 i\\
\hline
\mathbb{Z}[i] & 2 & 1 +i & 0 & -1 -i, -1 -i, 2 i\\
\hline
\mathbb{Z}[i] & 4 & 1 +i & 0 & -4, -1 -i, -1 +i\\
\hline
\mathbb{Z}\left[ i \sqrt{2} \right] & 2 & i \sqrt{2} & 0 & -2, -i \sqrt{2}, i \sqrt{2}\\
\hline
\mathbb{Z}\left[ \frac{1 +i \sqrt{7}}{2} \right] & 2 & \frac{1 -i \sqrt{7}}{2} & 0 & \frac{-3 -i \sqrt{7}}{2}, \frac{-3 -i \sqrt{7}}{2}, \frac{-1 +i \sqrt{7}}{2}\\
\hline
\mathbb{Z}\left[ \frac{1 +i \sqrt{7}}{2} \right] & 2 & \frac{1 -i \sqrt{7}}{2} & \frac{1}{2} & \frac{-1 +i \sqrt{7}}{2}, \frac{-1 +i \sqrt{7}}{2}, \frac{1 -i \sqrt{7}}{2}\\
\hline
\mathbb{Z}\left[ \frac{1 +i \sqrt{7}}{2} \right] & 2 & \frac{1 +i \sqrt{7}}{2} & 0 & \frac{-3 +i \sqrt{7}}{2}, \frac{-3 +i \sqrt{7}}{2}, \frac{-1 -i \sqrt{7}}{2}\\
\hline
\mathbb{Z}\left[ \frac{1 +i \sqrt{7}}{2} \right] & 2 & \frac{1 +i \sqrt{7}}{2} & \frac{1}{2} & \frac{-1 -i \sqrt{7}}{2}, \frac{-1 -i \sqrt{7}}{2}, \frac{1 +i \sqrt{7}}{2}\\
\hline
\end{array}$
\end{table}

\begin{corollary}
\label{corollary:quadLattes}
Assume that $f \colon \widehat{\mathbb{C}} \rightarrow \widehat{\mathbb{C}}$ is a quadratic rational map. Then $f$ is a Latt\`{e}s map if and only if its multipliers at its fixed points are 
\begin{itemize}
\item either $-4$, $-1 -i$ and $-1 +i$,
\item or $-1 -i$, $-1 -i$ and $2 i$,
\item or $-1 +i$, $-1 +i$ and $-2 i$,
\item or $-2$, $-i \sqrt{2}$ and $i \sqrt{2}$,
\item or $\frac{-3 -i \sqrt{7}}{2}$, $\frac{-3 -i \sqrt{7}}{2}$ and $\frac{-1 +i \sqrt{7}}{2}$,
\item or $\frac{-3 +i \sqrt{7}}{2}$, $\frac{-3 +i \sqrt{7}}{2}$ and $\frac{-1 -i \sqrt{7}}{2}$,
\item or $\frac{-1 -i \sqrt{7}}{2}$, $\frac{-1 -i \sqrt{7}}{2}$ and $\frac{1 +i \sqrt{7}}{2}$,
\item or $\frac{-1 +i \sqrt{7}}{2}$, $\frac{-1 +i \sqrt{7}}{2}$ and $\frac{1 -i \sqrt{7}}{2}$.
\end{itemize}
\end{corollary}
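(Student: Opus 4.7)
The proof proposal is essentially a direct combination of the three ingredients built up in the preceding paragraphs, so the plan is to keep it short and structure it as a clean two-way implication.

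For the forward direction, I would assume $f$ is a quadratic Latt\`{e}s map and invoke Proposition~\ref{proposition:quadLattes} to conclude that $f$ is conjugate to one of the eight explicit maps $\Lat_{a,b}^{\Lambda,n}$ listed there. Since the multiplier is a conjugacy invariant, the multipliers of $f$ at its fixed points coincide (as an unordered triple) with the corresponding row of Table~\ref{table:quadLattes}, which is exactly one of the eight triples in the statement of the corollary.

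For the reverse direction, I would suppose that the multipliers $\lambda_{1}, \lambda_{2}, \lambda_{3}$ of $f$ at its fixed points form one of the eight triples listed. By Table~\ref{table:quadLattes}, there is a quadratic Latt\`{e}s map $\widetilde{f}$ from Proposition~\ref{proposition:quadLattes} with the same unordered triple of fixed-point multipliers, hence with the same elementary symmetric functions $\sigma_{1}^{f} = \sigma_{1}^{\widetilde{f}}$ and $\sigma_{2}^{f} = \sigma_{2}^{\widetilde{f}}$. Corollary~\ref{corollary:quadConj} then forces $[f] = [\widetilde{f}]$ in $\mathcal{M}_{2}(\mathbb{C})$, so $f$ is conjugate to $\widetilde{f}$ and therefore is itself a Latt\`{e}s map.

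The only real content lies in the verification of Table~\ref{table:quadLattes}, which is the main obstacle and is assumed here as background data. For each pair $(\Lambda, n)$ and each admissible $(a,b)$, the fixed points of $\Lat_{a,b}^{\Lambda,n}$ pull back under $\wp_{\Lambda}$ (respectively $\wp_{\Lambda}^{2}$) to the finitely many $w + \Lambda$ satisfying $aw + b \equiv \pm w \pmod{\Lambda}$ (respectively $aw + b \equiv i^{k} w \pmod{\Lambda}$), and the multiplier at such a fixed point is $\pm a$ (respectively $i^{k} a$) for the appropriate sign, reflecting how the symmetry of the $\wp$-projection combines with the derivative $a$ of the affine lift. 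I would carry out this computation systematically case by case to populate the table; nothing beyond straightforward lattice arithmetic is required, but one must be careful with the two-to-one (or four-to-one) projection in order to correctly identify which sign/root of unity occurs at each fixed point.
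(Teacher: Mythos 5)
Your argument is exactly the paper's: the forward direction is Proposition~\ref{proposition:quadLattes} together with Table~\ref{table:quadLattes} and conjugacy-invariance of multipliers, and the reverse direction is the injectivity of $\Mult_{2}^{(1)}$ from Corollary~\ref{corollary:quadConj}, so the corollary is correct as you prove it (the paper likewise takes the table as a computation). One small caution about your sketch of how the table itself would be verified: at a fixed point of $f$ lying under a ramification point of $\wp_{\Lambda}$ (resp.\ $\wp_{\Lambda}^{2}$) the multiplier is $(\epsilon a)^{e}$ with $e$ the local degree of the projection, not $\epsilon a$ itself~-- this is how the entries $-2i$, $2i$, $-2$ and $-4$ arise, e.g.\ $-4 = (1+i)^{4}$~-- so the formula ``$\pm a$ or $i^{k} a$'' as you state it would miss those values.
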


\subsection{The ring of integers of an imaginary quadratic field}

Finally, we recall here some properties of the ring of integers $R_{D}$ of the imaginary quadratic field $\mathbb{Q}\left( i \sqrt{D} \right)$, with $D$ a positive squarefree integer.

Assume that $D$ is a positive squarefree integer. Then we have \[ R_{D} = \mathbb{Z}\left[ \alpha_{D} \right] \, \text{,} \quad \text{where} \quad \alpha_{D} = \begin{cases} i \sqrt{D} & \text{if } D \equiv 1, 2 \pmod{4}\\ \frac{1 +i \sqrt{D}}{2} & \text{if } D \equiv 3 \pmod{4} \end{cases} \, \text{.} \]

The ring $R_{D}$ is an integrally closed domain. In particular, if $a, b \in R_{D}$ are such that $a b^{2}$ is a square in $R_{D}$, then $a$ is a square in $R_{D}$ or $b$ is zero. This property will be useful in our proof of Theorem~\ref{theorem:main}.

The elements of $R_{D}$ form a lattice in $\mathbb{C}$. Let us describe the intersections of $R_{D}$ with the Euclidean disks centered at the origin. We denote by $N \colon \mathbb{C} \rightarrow \mathbb{R}_{\geq 0}$ the map given by $N(z) = \lvert z \rvert^{2}$, which is multiplicative and agrees with the norm of the extension $\mathbb{Q}\left( i \sqrt{D} \right)/\mathbb{Q}$.

Suppose that $D \equiv 1, 2 \pmod{4}$. For all $x, y \in \mathbb{Z}$, we have \[ N\left( x +y \alpha_{D} \right) = x^{2} +D y^{2} \in \mathbb{Z}_{\geq 0} \, \text{.} \] Therefore, for every $B \geq 0$, we have \[ \left\lbrace z \in R_{D} : N(z) \leq B \right\rbrace \subset \left\lbrace x +y \alpha_{D} : x, y \in \mathbb{Z}, \, \lvert x \rvert \leq \sqrt{B}, \, \lvert y \rvert \leq \sqrt{\frac{B}{D}} \right\rbrace \, \text{,} \] and in particular \[ \left\lbrace z \in R_{D} : N(z) \leq B \right\rbrace \subset \mathbb{Z} \quad \text{if} \quad B < D \, \text{.} \]

Suppose that $D \equiv 3 \pmod{4}$. For all $x, y \in \mathbb{Z}$, we have \[ N\left( x +y \alpha_{D} \right) = \left( x +\frac{1}{2} y \right)^{2} +\frac{D}{4} y^{2} = x^{2} +x y +\frac{D +1}{4} y^{2} \in \mathbb{Z}_{\geq 0} \, \text{.} \] Therefore, for every $B \geq 0$, we have \[ \left\lbrace z \in R_{D} : N(z) \leq B \right\rbrace \subset \left\lbrace x +y \alpha_{D} : x, y \in \mathbb{Z}, \, \lvert x \rvert \leq \sqrt{B} +\sqrt{\frac{B}{D}}, \, \lvert y \rvert \leq 2 \sqrt{\frac{B}{D}} \right\rbrace \, \text{,} \] and in particular \[ \left\lbrace z \in R_{D} : N(z) \leq B \right\rbrace \subset \mathbb{Z} \quad \text{if} \quad 4 B < D \, \text{.} \]

Thus, the set of all imaginary quadratic integers is a discrete subset of $\mathbb{C}$ and, for every $B \geq 0$, we can determine the pairs $(D, z) \in \mathbb{Z} \times \mathbb{C}$ such that $D$ is a positive squarefree integer, $z \in R_{D}$ and $N(z) \leq B$.

\section{Proof of the result}
\label{section:proof}

We shall prove here Theorem~\ref{theorem:main}. It follows directly from the three lemmas below.

\begin{lemma}
\label{lemma:proofCases}
Assume that $D$ is a positive squarefree integer and $f \colon \widehat{\mathbb{C}} \rightarrow \widehat{\mathbb{C}}$ is a quadratic rational map that has no superattracting or multiple fixed point and whose multiplier at each cycle with period less than or equal to $5$ lies in $R_{D}$. Then $f$ is either a power map or a Latt\`{e}s map.
\end{lemma}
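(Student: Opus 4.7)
The plan is to work entirely in the moduli space coordinates provided by Corollary~\ref{corollary:quadConj}. Since the three fixed points of $f$ are simple and non-superattracting, applying Corollary~\ref{corollary:multRoots} to $M_1^f$ shows that the multipliers at the fixed points all lie in $R_D \setminus \lbrace 0, 1 \rbrace$, so $\sigma_1 := \sigma_1^f$ and $\sigma_2 := \sigma_2^f$ lie in $R_D$ and, by Proposition~\ref{proposition:quadMult}, each $M_n^f$ is an explicit polynomial in $\lambda$ with coefficients in $\mathbb{Z}[\sigma_1, \sigma_2]$. The goal is to show that, under the splitting hypotheses on $M_1^f, \dots, M_5^f$, the pair $(\sigma_1, \sigma_2) \in R_D^2$ can take only finitely many values, each of which is accounted for either by the power map $z \mapsto z^{-2}$ (the unique power map with no superattracting fixed point) or by one of the eight Lattès conjugacy classes listed in Corollary~\ref{corollary:quadLattes}.

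The first genuine constraint comes from $M_3^f$. The period-$2$ polynomial $M_3^f$ is linear and so splits automatically; but $M_3^f$ has degree $2$, so it splits in $R_D[\lambda]$ if and only if its discriminant $\Delta_3(\sigma_1, \sigma_2) = \left( \sigma_1^{(3)} \right)^2 - 4 \sigma_2^{(3)}$ is a square in $R_D$. I would compute $\Delta_3$ explicitly from the formulas recalled in the excerpt and find its square-free factorization $\Delta_3 = A \cdot B^2$ in $\mathbb{Z}[\sigma_1, \sigma_2]$ with $A$ square-free. Because $R_D$ is integrally closed, either $B(\sigma_1, \sigma_2) = 0$ (an algebraic curve) or $A(\sigma_1, \sigma_2) = \mu^2$ for some $\mu \in R_D$. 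Combining this with the analogous splitting condition for $M_4^f$ (degree $3$, so a discriminant and resolvent-cubic analysis, again exploiting integral closure) I expect the locus of admissible $(\sigma_1, \sigma_2)$ to decompose into two one-parameter subvarieties together with finitely many isolated points; these are the two one-parameter families and the finitely many other cases mentioned in the introduction.

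On each of the two one-parameter families I would parametrize $(\sigma_1, \sigma_2)$ by a single variable $t$ and substitute into the remaining splitting conditions, most importantly into $M_5^f$ (which has degree $6$). This converts them into one-variable polynomial conditions on $t$: their roots must lie in $R_D$ and appropriate discriminants must be squares in $R_D$. Applying once more the integral-closure property of $R_D$ and, when required, the norm inequality $\lbrace z \in R_D : N(z) \leq B \rbrace \subset \mathbb{Z}$ from the last subsection of Section~\ref{section:prelim} (effective as soon as $D$ exceeds an explicit bound depending on the degrees in $t$), I expect to reduce to finitely many admissible $t$. For each such value, and for each of the finitely many isolated points outside the two families, I would finally compute $(\sigma_1, \sigma_2)$ and match it against the list in Corollary~\ref{corollary:quadLattes} or against the multipliers $(-2, -2, -2)$ of $z \mapsto z^{-2}$.

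The main obstacle is the computational core of step two: the square-free factorization of $\Delta_3$ and the careful analysis of how the period-$4$ and period-$5$ splitting conditions intersect to cut the moduli space down to exactly two one-parameter families plus finitely many points. Because $M_5^f$ already has degree $6$ with coefficients of substantial total degree in $\sigma_1, \sigma_2$, this step essentially requires computer algebra, as the excerpt itself hints by invoking SageMath. A secondary subtlety is the split between small $D$, where $R_D$ genuinely enlarges $\mathbb{Z}$ and the eight rigid Lattès classes of Corollary~\ref{corollary:quadLattes} must be produced, and large $D$, where the norm bounds collapse every would-be admissible coordinate into $\mathbb{Z}$ and thus only the power map $z \mapsto z^{-2}$ survives.
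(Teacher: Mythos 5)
Your proposal has a genuine gap at the decisive step: the mechanism that is supposed to produce finiteness. You hope that imposing the splitting conditions for $M_{3}^{f}$ and $M_{4}^{f}$ on the pair $\left( \sigma_{1}, \sigma_{2} \right) \in R_{D}^{2}$ will cut the admissible locus down to ``two one-parameter subvarieties together with finitely many isolated points.'' But a condition of the form ``$\Delta\left( \sigma_{1}, \sigma_{2} \right)$ is a square in $R_{D}$'' is a Diophantine condition on integral points, not an algebraic condition on $\left( \sigma_{1}, \sigma_{2} \right)$: its solution set is in general Zariski dense (already over $\mathbb{Z}$, the set of $(x, y)$ with $x^{2} +y$ a square is dense), so nothing in your argument actually confines $\left( \sigma_{1}, \sigma_{2} \right)$ to curves or points. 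The trick you invoke~-- factor the discriminant, use integral closure, and turn a square condition into a divisibility statement like $(a -b)(a +b) = 8$~-- is exactly what the paper does, but only in the \emph{one-parameter} Lemmas~\ref{lemma:proofSuper} and~\ref{lemma:proofSimple} (the families $f_{c}$ and $g_{a, 1}$, which arise from the superattracting and multiple fixed-point cases that the present lemma explicitly \emph{excludes}). You have transplanted that technique into a two-parameter setting where it gives no finiteness, and you have also misread the introduction: the ``two one-parameter families'' are those two degenerate normal forms, not components of the locus treated here.

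The idea you are missing is the holomorphic fixed-point formula used arithmetically. Since $f$ has neither a superattracting nor a multiple fixed point, each $\mu_{j} = 1 -\lambda_{j}$ lies in $R_{D} \setminus \lbrace 0, 1 \rbrace$ and $\frac{1}{\mu_{1}} +\frac{1}{\mu_{2}} +\frac{1}{\mu_{3}} = 1$. Because $\Re\left( \frac{1}{z} \right) \leq \frac{1}{2}$ for every $z \in R_{D} \setminus \lbrace 0, 1 \rbrace$ (equivalently $\lvert z -1 \rvert \geq 1$), one gets $\Re\left( \frac{1}{\mu_{3}} \right) \geq \frac{1}{3}$ and $\Re\left( \frac{1}{\mu_{2}} \right) \geq \frac{1}{4}$ after ordering, which places $\mu_{2}, \mu_{3}$ in a bounded disk; discreteness of the lattice $R_{D}$ then yields an explicit finite list of triples $\left( \lambda_{1}, \lambda_{2}, \lambda_{3} \right)$ for each $D$, and only $(-5, -2, -1)$, $(-3, -3, -1)$, $(-2, -2, -2)$ survive for all but six values of $D$. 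Note that this uses the hypothesis that the individual multipliers $\lambda_{j}$ lie in $R_{D}$, which is strictly stronger than your reduction to $\sigma_{1}, \sigma_{2} \in R_{D}$. Only \emph{after} this finiteness step are the polynomials $M_{3}^{f}, M_{4}^{f}, M_{5}^{f}$ computed (via Proposition~\ref{proposition:quadMult}, as you suggest) to eliminate every candidate except $(-2, -2, -2)$ and the eight Latt\`{e}s triples of Corollary~\ref{corollary:quadLattes}; for instance $(-5, -2, -1)$ dies because $M_{4}^{f}$ is an irreducible cubic over $\mathbb{Q}$, which cannot split in a quadratic ring. Without the fixed-point-formula step, your plan has no route to finiteness and cannot be completed as written.
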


\begin{proof}
Denote by $\lambda_{1}, \lambda_{2}, \lambda_{3}$ the multipliers of $f$ at its fixed points, which belong to $R_{D} \setminus \lbrace 0, 1 \rbrace$ by hypothesis. Then $1 -\lambda_{j}$ lies in $R_{D} \setminus \lbrace 0, 1 \rbrace$ for all $j \in \lbrace 1, 2, 3 \rbrace$ and we have \[ \frac{1}{1 -\lambda_{1}} +\frac{1}{1 -\lambda_{2}} +\frac{1}{1 -\lambda_{3}} = 1 \, \text{.} \] If $\mu_{1}, \mu_{2}, \mu_{3}$ are elements of $R_{D} \setminus \lbrace 0, 1 \rbrace$ that satisfy \[ \frac{1}{\mu_{1}} +\frac{1}{\mu_{2}} +\frac{1}{\mu_{3}} = 1 \quad \text{and} \quad \Re\left( \frac{1}{\mu_{1}} \right) \leq \Re\left( \frac{1}{\mu_{2}} \right) \leq \Re\left( \frac{1}{\mu_{3}} \right) \, \text{,} \] then we have \[ \Re\left( \frac{1}{\mu_{3}} \right) \geq \frac{1}{3} \, \text{,} \quad \Re\left( \frac{1}{\mu_{2}} \right) \geq \frac{1}{2} \left( 1 -\Re\left( \frac{1}{\mu_{3}} \right) \right) \geq \frac{1}{4} \quad \text{and} \quad \frac{1}{\mu_{1}} = 1 -\frac{1}{\mu_{2}} -\frac{1}{\mu_{3}} \] since $\Re\left( \frac{1}{z} \right) \leq \frac{1}{2}$ for all $z \in R_{D} \setminus \lbrace 0, 1 \rbrace$. Therefore, there are only finitely many unordered triples $\mu_{1}, \mu_{2}, \mu_{3}$ of elements of $R_{D} \setminus \lbrace 0, 1 \rbrace$ such that $\frac{1}{\mu_{1}} +\frac{1}{\mu_{2}} +\frac{1}{\mu_{3}} = 1$. If $D = 1$, then there are exactly $23$ such unordered triples (see Figure~\ref{figure:proofCases1}); if $D = 2$, there are $9$; if $D = 3$, there are $27$ (see Figure~\ref{figure:proofCases3}); if $D = 7$, there are $14$; if $D = 11$, there are $3$; if $D = 15$, there are $5$. In the other cases, $2$, $3$ and $4$ are the only elements $z \in R_{D} \setminus \lbrace 0, 1 \rbrace$ such that $\Re\left( \frac{1}{z} \right) \geq \frac{1}{4}$, and it follows that the only triples $\left( \mu_{1}, \mu_{2}, \mu_{3} \right)$ of elements of $R_{D} \setminus \lbrace 0, 1 \rbrace$ such that $\frac{1}{\mu_{1}} +\frac{1}{\mu_{2}} +\frac{1}{\mu_{3}} = 1$ are $(2, 3, 6)$, $(2, 4, 4)$ and $(3, 3, 3)$ up to permutation (see Figure~\ref{figure:proofCases5} and Figure~\ref{figure:proofCases19}). Thus, there are only finitely many possible values for the triple $\left( \lambda_{1}, \lambda_{2}, \lambda_{3} \right)$, and these are $(-5, -2, -1)$, $(-3, -3, -1)$ and $(-2, -2, -2)$ up to permutation if $D$ is different from $1$, $2$, $3$, $7$, $11$ and $15$. If $\lambda_{1}, \lambda_{2}, \lambda_{3}$ equal $-5, -2, -1$, then we have \[ M_{4}^{f}(\lambda) = \lambda^{3} -159 \lambda^{2} +7419 \lambda -84221 \, \text{,} \] which does not split into linear factors in $R_{D}[\lambda]$ since it is irreducible over $\mathbb{Q}$ of degree $3$ and $R_{D}$ is contained in an extension of $\mathbb{Q}$ of degree $2$. If $\lambda_{1}, \lambda_{2}, \lambda_{3}$ equal $-3, -3, -1$, then we have \[ M_{5}^{f}(\lambda) = \left( \lambda^{3} +267 \lambda^{2} +20871 \lambda +414157 \right)^{2} \, \text{,} \] which does not split into linear factors in $R_{D}[\lambda]$ either since it is the square of an irreducible polynomial over $\mathbb{Q}$ of degree $3$ and $R_{D}$ is contained in an extension of $\mathbb{Q}$ of degree $2$. Therefore, since the polynomials $M_{n}^{f}$, with $n \in \lbrace 3, 4, 5 \rbrace$, split into linear factors in $R_{D}[\lambda]$ by Corollary~\ref{corollary:multRoots}, we have \[ \left( \lambda_{1}, \lambda_{2}, \lambda_{3} \right) \in \begin{cases} \begin{gathered} {\textstyle \lbrace (-4, -1 -i, -1 +i), (-2, -2, -2),}\\ {\textstyle (-1 -i, -1 -i, 2 i), (-1 +i, -1 +i, -2 i) \rbrace} \end{gathered} & \text{if } D = 1\\ \left\lbrace (-2, -2, -2), \left( -2, -i \sqrt{2}, i \sqrt{2} \right) \right\rbrace & \text{if } D = 2\\ \begin{gathered} {\textstyle \Bigl\lbrace (-2, -2, -2), \left( \frac{-3 -i \sqrt{7}}{2}, \frac{-3 -i \sqrt{7}}{2}, \frac{-1 +i \sqrt{7}}{2} \right),}\\ {\textstyle \left( \frac{-3 +i \sqrt{7}}{2}, \frac{-3 +i \sqrt{7}}{2}, \frac{-1 -i \sqrt{7}}{2} \right), \left( \frac{-1 -i \sqrt{7}}{2}, \frac{-1 -i \sqrt{7}}{2}, \frac{1 +i \sqrt{7}}{2} \right),}\\ {\textstyle \left( \frac{-1 +i \sqrt{7}}{2}, \frac{-1 +i \sqrt{7}}{2}, \frac{1 -i \sqrt{7}}{2} \right) \Bigr\rbrace} \end{gathered} & \text{if } D = 7\\ \left\lbrace (-2, -2, -2) \right\rbrace & \text{otherwise} \end{cases} \] up to permutation (see Table~\ref{table:proofCases3}, Table~\ref{table:proofCases4} and Table~\ref{table:proofCases5}). If $\lambda_{1}, \lambda_{2}, \lambda_{3}$ equal $-2, -2, -2$, then $f$ is conjugate to $z \mapsto \frac{1}{z^{2}}$; in the other cases, $f$ is a Latt\`{e}s map by Corollary~\ref{corollary:quadLattes}. Thus, the lemma is proved.
\end{proof}

\begin{figure}
\framebox{\includegraphics{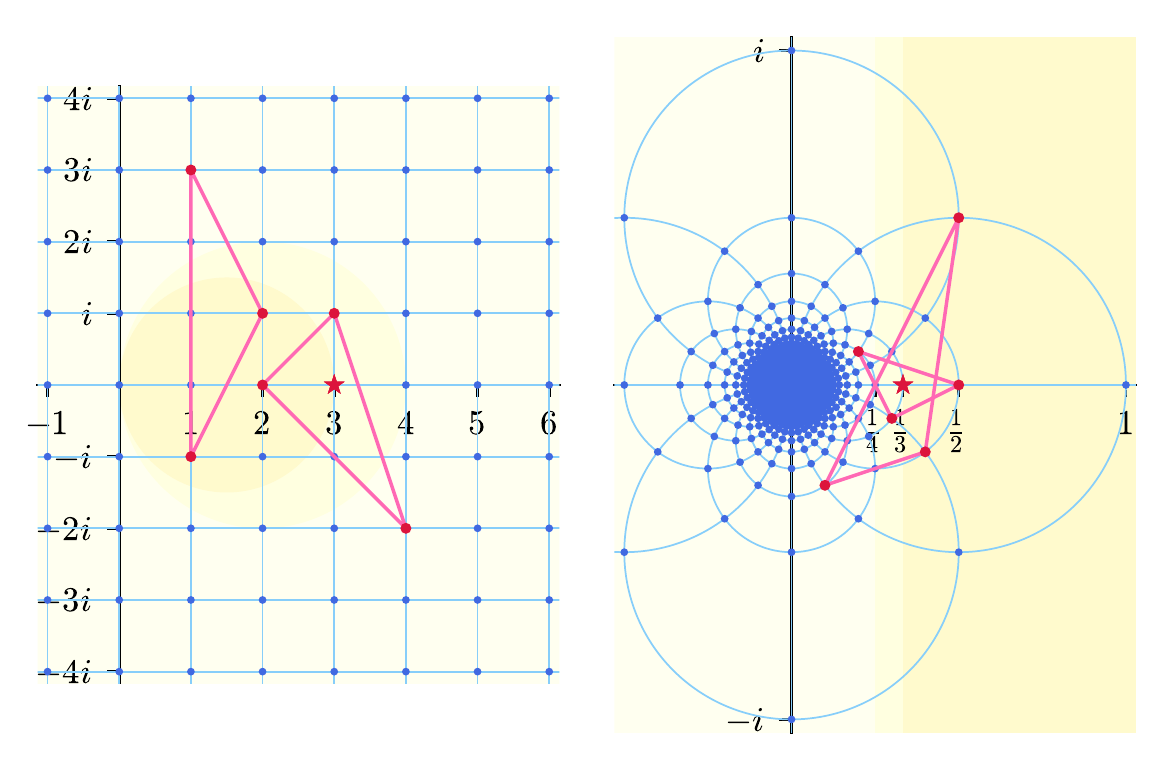}}
\caption{Left: The lattice $R_{1}$ and $3$ of the $23$ unordered triples $\mu_{1}, \mu_{2}, \mu_{3}$ of elements of $R_{1} \setminus \lbrace 0, 1 \rbrace$ such that $\frac{1}{\mu_{1}} +\frac{1}{\mu_{2}} +\frac{1}{\mu_{3}} = 1$. Right: The inversion of $R_{1}$ and of these triples. If $\mu_{1}, \mu_{2}, \mu_{3}$ is such a triple, then, up to relabeling, we have $\Re\left( \frac{1}{\mu_{3}} \right) \geq \frac{1}{3}$, $\Re\left( \frac{1}{\mu_{2}} \right) \geq \frac{1}{4}$ and $\frac{1}{3}$ is the centroid of the triangle with vertices $\frac{1}{\mu_{1}}, \frac{1}{\mu_{2}}, \frac{1}{\mu_{3}}$.}
\label{figure:proofCases1}
\end{figure}

\begin{figure}
\framebox{\includegraphics{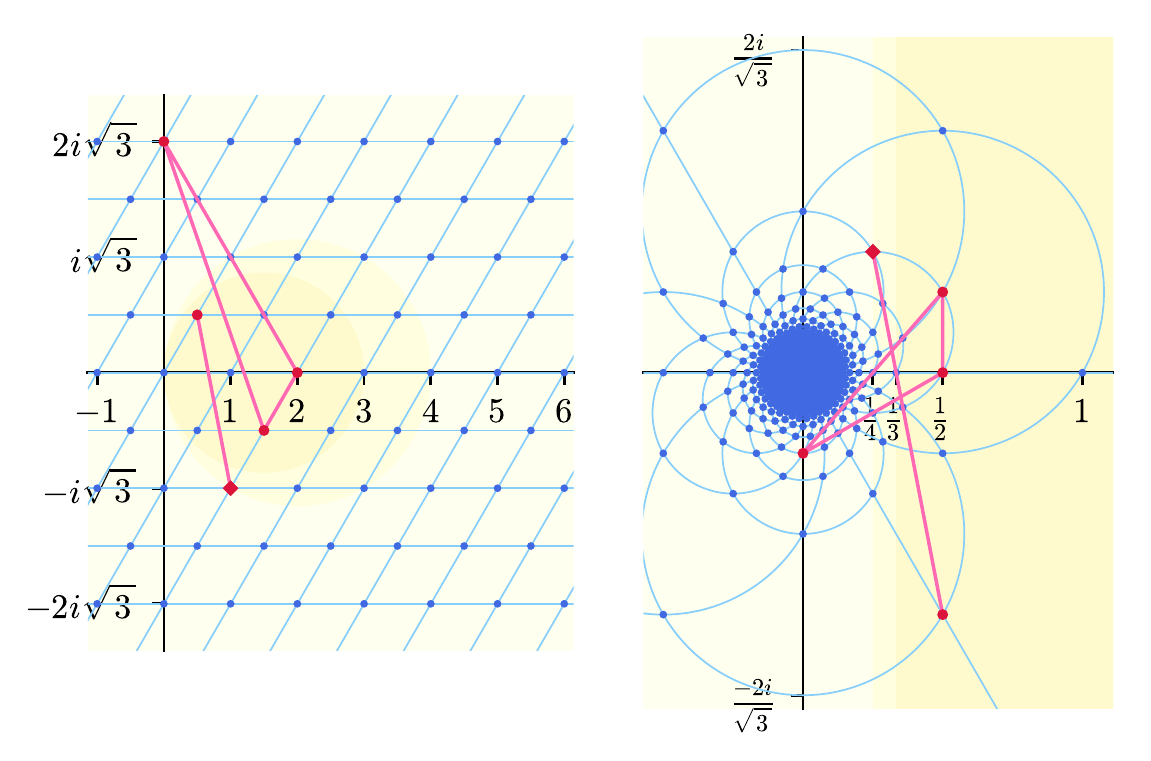}}
\caption{Left: The lattice $R_{3}$ and $2$ of the $27$ unordered triples $\mu_{1}, \mu_{2}, \mu_{3}$ of elements of $R_{3} \setminus \lbrace 0, 1 \rbrace$ such that $\frac{1}{\mu_{1}} +\frac{1}{\mu_{2}} +\frac{1}{\mu_{3}} = 1$. Right: The inversion of $R_{3}$ and of these triples.}
\label{figure:proofCases3}
\end{figure}

\begin{figure}
\framebox{\includegraphics{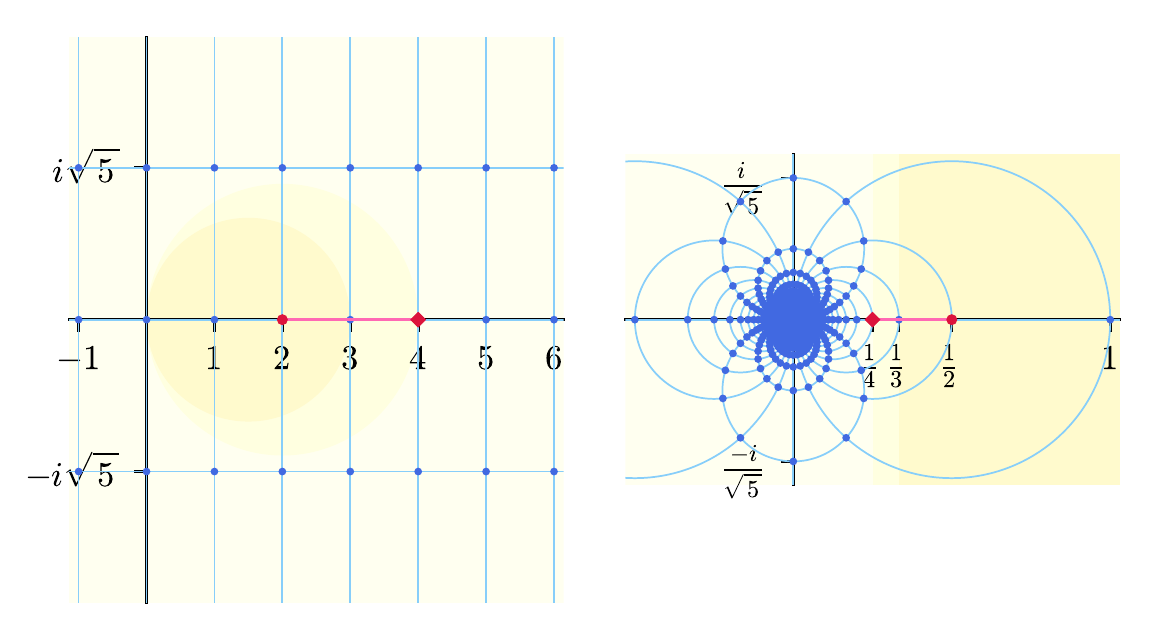}}
\caption{Left: The lattice $R_{5}$ and $1$ of the $3$ unordered triples $\mu_{1}, \mu_{2}, \mu_{3}$ of elements of $R_{5} \setminus \lbrace 0, 1 \rbrace$ such that $\frac{1}{\mu_{1}} +\frac{1}{\mu_{2}} +\frac{1}{\mu_{3}} = 1$. Right: The inversion of $R_{5}$ and of this triple. The only elements $z \in R_{5} \setminus \lbrace 0, 1 \rbrace$ such that $\Re\left( \frac{1}{z} \right) \geq \frac{1}{4}$ are $2$, $3$ and $4$.}
\label{figure:proofCases5}
\end{figure}

\begin{figure}
\framebox{\includegraphics{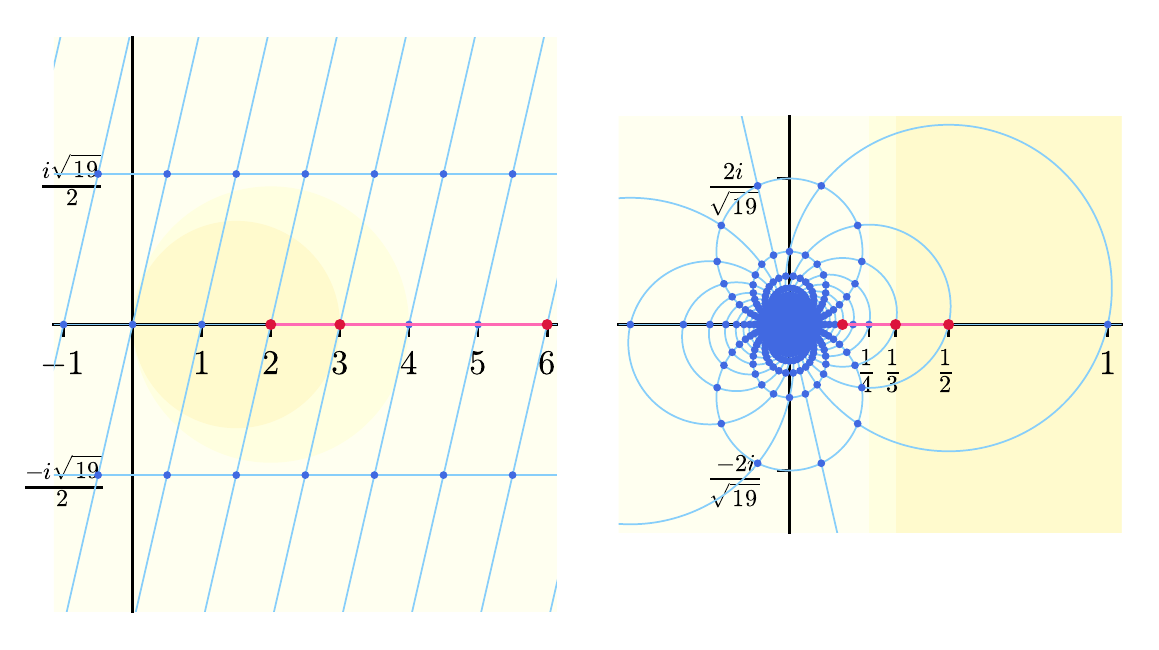}}
\caption{Left: The lattice $R_{19}$ and $1$ of the $3$ unordered triples $\mu_{1}, \mu_{2}, \mu_{3}$ of elements of $R_{19} \setminus \lbrace 0, 1 \rbrace$ such that $\frac{1}{\mu_{1}} +\frac{1}{\mu_{2}} +\frac{1}{\mu_{3}} = 1$. Right: The inversion of $R_{19}$ and of this triple.}
\label{figure:proofCases19}
\end{figure}

\begin{table}
\caption{Decomposition of $M_{3}^{f}$ into irreducible factors in $R_{D}[\lambda]$ for all the unordered triples $\lambda_{1}, \lambda_{2}, \lambda_{3}$ of elements of $\mathbb{C} \setminus \lbrace 0, 1 \rbrace$~-- up to complex conjugation~-- such that $M_{1}^{f}$ and $M_{2}^{f}$ split into linear factors in $R_{D}[\lambda]$ but $M_{3}^{f}$ does not.}
\label{table:proofCases3}
$\begin{array}{|c|c|c|}
\hline
D & \lambda_{1}, \lambda_{2}, \lambda_{3} & \text{Factorization of } M_{3}^{f} \text{ in } R_{D}[\lambda]\\
\hline
\hline
1 & -3 -2 i, -2 +i, -1 & \lambda^{2} +(22 +4 i) \lambda +121 +40 i\\
\hline
1 & -1 -4 i, -1, -1 +i & \lambda^{2} +(10 +12 i) \lambda +5 +48 i\\
\hline
1 & -1 -i, -3 i, i & \lambda^{2} +(12 +2 i) \lambda +15 -28 i\\
\hline
1 & -1, -i, 1 +2 i & \lambda^{2} +(-2 -4 i) \lambda +25 +8 i\\
\hline
2 & -1 -2 i \sqrt{2}, -1, -1 +i \sqrt{2} & \lambda^{2} +\left( 10 +4 i \sqrt{2} \right) \lambda +33 +16 i \sqrt{2}\\
\hline
3 & -3 -2 i \sqrt{3}, \frac{-3 +i \sqrt{3}}{2}, -1 & \lambda^{2} +\left( 20 +6 i \sqrt{3} \right) \lambda +79 +54 i \sqrt{3}\\
\hline
3 & -2 -i \sqrt{3}, -2 +i \sqrt{3}, -1 & \lambda^{2} +18 \lambda +89\\
\hline
3 & -1, -i \sqrt{3}, i \sqrt{3} & \lambda^{2} +2 \lambda +25\\
\hline
7 & \frac{-5 -i \sqrt{7}}{2}, \frac{-5 +i \sqrt{7}}{2}, -1 & \lambda^{2} +22 \lambda +125\\
\hline
7 & -2 -i \sqrt{7}, \frac{-3 +i \sqrt{7}}{2}, -1 & \lambda^{2} +\left( 16 +2 i \sqrt{7} \right) \lambda +67 +14 i \sqrt{7}\\
\hline
7 & -1, \frac{-1 -i \sqrt{7}}{2}, i \sqrt{7} & \lambda^{2} +\left( 4 -2 i \sqrt{7} \right) \lambda +19 -2 i \sqrt{7}\\
\hline
15 & \frac{-3 -i \sqrt{15}}{2}, \frac{-3 +i \sqrt{15}}{2}, -1 & \lambda^{2} +14 \lambda +61\\
\hline
15 & -1, \frac{-1 -i \sqrt{15}}{2}, \frac{-1 +i \sqrt{15}}{2} & \lambda^{2} +6 \lambda +29\\
\hline
\end{array}$
\end{table}

\begin{table}
\caption{Decomposition of $M_{4}^{f}$ into irreducible factors in $R_{D}[\lambda]$ for all the unordered triples $\lambda_{1}, \lambda_{2}, \lambda_{3}$ of elements of $\mathbb{C} \setminus \lbrace 0, 1 \rbrace$ other than $-5, -2, -1$~-- up to complex conjugation~-- such that $M_{n}^{f}$ splits into linear factors in $R_{D}[\lambda]$, with $n \in \lbrace 1, 2, 3 \rbrace$, but $M_{4}^{f}$ does not.}
\label{table:proofCases4}
$\begin{array}{|c|c|c|}
\hline
D & \lambda_{1}, \lambda_{2}, \lambda_{3} & \text{Factorization of } M_{4}^{f} \text{ in } R_{D}[\lambda]\\
\hline
\hline
1 & -2 -i, -2 i, i & (\lambda -1) \left( \lambda^{2} +(6 +12 i) \lambda +41 +60 i \right)\\
\hline
1 & -1 -2 i, -1, -1 +2 i & (\lambda -11) \left( \lambda^{2} +12 \lambda +211 \right)\\
\hline
2 & -2, -1 -i \sqrt{2}, -1 +i \sqrt{2} & (\lambda -1) \left( \lambda^{2} +2 \lambda +37 \right)\\
\hline
3 & \frac{-7 -i \sqrt{3}}{2}, -1 -i \sqrt{3}, \frac{-1 +i \sqrt{3}}{2} & \begin{gathered} {\textstyle \lambda^{3} +\frac{99 -3 i \sqrt{3}}{2} \lambda^{2}}\\ {\textstyle +\frac{1449 +9 i \sqrt{3}}{2} \lambda +4267 +768 i \sqrt{3}} \end{gathered}\\
\hline
3 & -2 -2 i \sqrt{3}, \frac{-3 -i \sqrt{3}}{2}, \frac{-1 +i \sqrt{3}}{2} & \begin{gathered} {\textstyle \left( \lambda +\frac{1 +7 i \sqrt{3}}{2} \right)}\\ {\textstyle \left( \lambda^{2} +\left( 5 -i \sqrt{3} \right) \lambda +\frac{95 -17 i \sqrt{3}}{2} \right)} \end{gathered}\\
\hline
3 & -2 -i \sqrt{3}, \frac{-1 -i \sqrt{3}}{2}, i \sqrt{3} & \begin{gathered} {\textstyle \left( \lambda +8 +5 i \sqrt{3} \right)}\\ {\textstyle \left( \lambda^{2} +\left( -19 -i \sqrt{3} \right) \lambda -62 +65 i \sqrt{3} \right)} \end{gathered}\\
\hline
3 & -2, \frac{-1 -3 i \sqrt{3}}{2}, \frac{-1 +i \sqrt{3}}{2} & \begin{gathered} {\textstyle \lambda^{3} +\frac{39 -7 i \sqrt{3}}{2} \lambda^{2}}\\ {\textstyle +\frac{261 -19 i \sqrt{3}}{2} \lambda +449 -302 i \sqrt{3}} \end{gathered}\\
\hline
3 & -1, \frac{-1 -i \sqrt{3}}{2}, 1 +2 i \sqrt{3} & \begin{gathered} {\textstyle \lambda^{3} +\left( 33 -12 i \sqrt{3} \right) \lambda^{2}}\\ {\textstyle +\left( -297 -132 i \sqrt{3} \right) \lambda +103 +1392 i \sqrt{3}} \end{gathered}\\
\hline
3 & \frac{-1 -i \sqrt{3}}{2}, \frac{1 +i \sqrt{3}}{2}, 1 -i \sqrt{3} & \begin{gathered} {\textstyle \lambda^{3} +\frac{27 +27 i \sqrt{3}}{2} \lambda^{2}}\\ {\textstyle +\frac{-423 +39 i \sqrt{3}}{2} \lambda +883 +624 i \sqrt{3}} \end{gathered}\\
\hline
7 & -3, \frac{-1 -i \sqrt{7}}{2}, \frac{-1 +i \sqrt{7}}{2} & \lambda^{3} +25 \lambda^{2} +187 \lambda +587\\
\hline
7 & -1, \frac{1 -i \sqrt{7}}{2}, \frac{1 +i \sqrt{7}}{2} & \lambda^{3} +\lambda^{2} -5 \lambda -413\\
\hline
\end{array}$
\end{table}

\begin{table}
\caption{Decomposition of $M_{5}^{f}$ into irreducible factors in $R_{D}[\lambda]$ for all the unordered triples $\lambda_{1}, \lambda_{2}, \lambda_{3}$ of elements of $\mathbb{C} \setminus \lbrace 0, 1 \rbrace$ other than $-3, -3, -1$~-- up to complex conjugation~-- such that $M_{n}^{f}$ splits into linear factors in $R_{D}[\lambda]$, with $n \in \lbrace 1, \dotsc, 4 \rbrace$, but $M_{5}^{f}$ does not.}
\label{table:proofCases5}
$\begin{array}{|c|c|c|}
\hline
D & \lambda_{1}, \lambda_{2}, \lambda_{3} & \text{Factorization of } M_{5}^{f} \text{ in } R_{D}[\lambda]\\
\hline
\hline
1 & -2 -i, -2 -i, -1 +i & \begin{gathered} {\textstyle \bigl( \lambda^{3} +(10 +23 i) \lambda^{2}}\\ {\textstyle +(33 +188 i) \lambda +758 +1703 i \bigr)^{2}} \end{gathered}\\
\hline
1 & -1 -2 i, -1 -2 i, i & \begin{gathered} {\textstyle \bigl( \lambda^{3} +(-5 +32 i) \lambda^{2}}\\ {\textstyle +(-633 -640 i) \lambda +605 -11584 i \bigr)^{2}} \end{gathered}\\
\hline
1 & -i, -i, 1 +i & \begin{gathered} {\textstyle \bigl( \lambda^{3} +(4 +31 i) \lambda^{2}}\\ {\textstyle +(-171 -176 i) \lambda -700 +1699 i \bigr)^{2}} \end{gathered}\\
\hline
2 & -1 -i \sqrt{2}, -1 -i \sqrt{2}, i \sqrt{2} & \begin{gathered} {\textstyle \bigl( \lambda^{3} +\left( 3 +3 i \sqrt{2} \right) \lambda^{2}}\\ {\textstyle +\left( -27 -42 i \sqrt{2} \right) \lambda +3 -343 i \sqrt{2} \bigr)^{2}} \end{gathered}\\
\hline
3 & -2 -i \sqrt{3}, -2 -i \sqrt{3}, \frac{-1 +i \sqrt{3}}{2} & \begin{gathered} {\textstyle \bigl( \lambda^{3} +\left( -12 -21 i \sqrt{3} \right) \lambda^{2}}\\ {\textstyle +\left( -573 +36 i \sqrt{3} \right) \lambda -8380 +2709 i \sqrt{3} \bigr)^{2}} \end{gathered}\\
\hline
3 & \frac{-3 -i \sqrt{3}}{2}, \frac{-3 -i \sqrt{3}}{2}, -1 +i \sqrt{3} & \begin{gathered} {\textstyle \Bigl( \lambda^{3} +\frac{15 -5 i \sqrt{3}}{2} \lambda^{2}}\\ {\textstyle +\frac{-87 -169 i \sqrt{3}}{2} \lambda -320 -709 i \sqrt{3} \Bigr)^{2}} \end{gathered}\\
\hline
3 & \frac{-1 -i \sqrt{3}}{2}, \frac{-1 -i \sqrt{3}}{2}, 1 +i \sqrt{3} & \begin{gathered} {\textstyle \Bigl( \lambda^{3} +\frac{3 -3 i \sqrt{3}}{2} \lambda^{2}}\\ {\textstyle +\frac{-147 +45 i \sqrt{3}}{2} \lambda -577 -720 i \sqrt{3} \Bigr)^{2}} \end{gathered}\\
\hline
3 & -i \sqrt{3}, -i \sqrt{3}, \frac{1 +i \sqrt{3}}{2} & \begin{gathered} {\textstyle \bigl( \lambda^{3} +\left( 42 -29 i \sqrt{3} \right) \lambda^{2}}\\ {\textstyle +\left( -1329 -232 i \sqrt{3} \right) \lambda -7742 +4897 i \sqrt{3} \bigr)^{2}} \end{gathered}\\
\hline
\end{array}$
\end{table}

By Lemma~\ref{lemma:proofCases}, we are reduced to studying the quadratic rational maps that have a superattracting or multiple fixed point.

\begin{lemma}
\label{lemma:proofSuper}
Assume that $D$ is a positive squarefree integer and $f \colon \widehat{\mathbb{C}} \rightarrow \widehat{\mathbb{C}}$ is a quadratic rational map that has a superattracting fixed point and whose multiplier at each cycle with period less than or equal to $4$ lies in $R_{D}$. Then $f$ is either a power map or a Chebyshev map.
\end{lemma}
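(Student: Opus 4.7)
Since $f$ has a superattracting fixed point, it is conjugate to $f_{c} \colon z \mapsto z^{2}+c$ for a unique $c \in \mathbb{C}$, and the hypothesis as well as the conclusion are invariant under conjugacy. So I replace $f$ by $f_{c}$; the task is to show $c \in \{0, -2\}$. The finite fixed points of $f_{c}$ are the roots of $z^{2}-z+c$, and $f_{c}'(z) = 2z$, so their multipliers are $1 \pm \sqrt{1-4c}$. Both lie in $R_{D}$, so $\delta := \sqrt{1-4c} \in R_{D}$, equivalently $c = (1-\delta^{2})/4$ with $\delta \in R_{D}$. In particular $\sigma_{1}^{f_{c}} = 2$ and $\sigma_{2}^{f_{c}} = 4c$.

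Specializing the polynomial expressions displayed in the preceding example to these values yields, after a short computation,
\[
M_{3}^{f_{c}}(\lambda) = \lambda^{2} - (16+8c)\lambda + 64c^{3}+128c^{2}+64c+64,
\]
whose discriminant with respect to $\lambda$ simplifies to $-64c^{2}(4c+7) = 4(\delta^{2}-1)^{2}(\delta^{2}-8)$. By Corollary~\ref{corollary:multRoots}, both roots of $M_{3}^{f_{c}}$ must lie in $R_{D}$, so this discriminant must be a square in $R_{D}$. Invoking the integral closedness of $R_{D}$ (the property singled out in Subsection~2.3), either $\delta^{2}=1$, in which case $c = 0$ and $f_{c}$ is a power map, or $\delta^{2}-8 = \epsilon^{2}$ for some $\epsilon \in R_{D}$, so $(\delta-\epsilon)(\delta+\epsilon) = 8$ with $\delta-\epsilon \equiv \delta+\epsilon \pmod{2}$ in $R_{D}$ (the parity condition being what is required for $\delta,\epsilon$ to lie in $R_{D}$).

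I then analyse this factorization. Using the norm inequalities stated at the end of Section~\ref{section:prelim}, an element of $R_{D} \setminus \mathbb{Z}$ has norm at least $D$ when $D \equiv 1,2 \pmod{4}$ and at least $(D+1)/4$ when $D \equiv 3 \pmod{4}$; combined with $N(\delta-\epsilon) N(\delta+\epsilon) = 64$, this restricts the existence of genuinely non-rational solutions to an explicit finite list of small $D$. In the purely rational case, the parity condition forces $\{\delta-\epsilon, \delta+\epsilon\} = \{\pm 2, \pm 4\}$ up to ordering, giving $\delta = \pm 3$ and $c = -2$, in which case $f_{c}$ is a Chebyshev map.

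It remains to eliminate the finitely many exceptional pairs $(D,\delta)$ that yield $c \notin \{0,-2\}$ (for example $c = -3/4$ from $D=1$, $\delta = \pm 2$, $\epsilon = \pm 2i$, or $c = 1/4$ from $D=2$, $\delta = 0$, $\epsilon = \pm 2i\sqrt{2}$). For each such candidate I compute the cubic multiplier polynomial $M_{4}^{f_{c}}$ via Proposition~\ref{proposition:multDef}, or directly from the $\sigma_{j}^{(4)}$ formulas in the example above with $\sigma_{1}=2, \sigma_{2}=4c$, and check by explicit factorization that $M_{4}^{f_{c}}$ fails to split into linear factors over $R_{D}[\lambda]$, contradicting Corollary~\ref{corollary:multRoots}. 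The main obstacle is precisely this last step: although the list of candidates is finite and explicit, verifying non-splitting requires a case analysis in the spirit of Tables~\ref{table:proofCases3}--\ref{table:proofCases5}, i.e.\ carrying out the enumeration of factorizations of $8$ in each small $R_{D}$ and then factoring a cubic in $\lambda$ with coefficients in $R_{D}$ for each surviving candidate.
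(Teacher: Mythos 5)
Your argument is correct and follows essentially the same route as the paper: reduce to $f_{c}$, force $1-4c=\delta^{2}$ and $\delta^{2}-8=\epsilon^{2}$ with $\delta,\epsilon\in R_{D}$ (the paper gets the first from $\disc M_{1}^{f_{c}}$ rather than directly from the fixed-point multipliers, but this is equivalent), factor $(\delta-\epsilon)(\delta+\epsilon)=8$ using the norm to obtain a finite candidate list, and eliminate the candidates with $c\notin\{0,-2\}$ via $M_{4}^{f_{c}}$. The only difference is presentational: the paper carries out the finite verification you describe but defer, tabulating the full exceptional list~-- $c=-3/4$ and $c=1/2$ for $D=1$, $c=1/4$ for $D=2$, $c=\left( -1\pm 3i\sqrt{3} \right)/8$ for $D=3$~-- together with the explicit factorizations showing that $M_{4}^{f_{c}}$ does not split in $R_{D}[\lambda]$.
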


\begin{proof}
There exists a parameter $c \in \mathbb{C}$ such that $f$ is conjugate to $f_{c} \colon z \mapsto z^{2} +c$. Let us prove that $c \in \lbrace -2, 0 \rbrace$. By Corollary~\ref{corollary:multRoots}, the polynomials \[ M_{1}^{f_{c}}(\lambda) = \lambda^{3} -2 \lambda^{2} +4 c \lambda \quad \text{and} \quad M_{3}^{f_{c}}(\lambda) = \lambda^{2} +(-8 c -16) \lambda +64 c^{3} +128 c^{2} +64 c +64 \] split into linear factors in $R_{D}[\lambda]$, and hence $4 c$ lies in $R_{D}$ and the discriminants \[ \disc M_{1}^{f_{c}} = -2^{2} (4 c -1) (4 c)^{2} \quad \text{and} \quad \disc M_{3}^{f_{c}} = -2^{2} (4 c +7) (4 c)^{2} \] are squares in $R_{D}$. Therefore, we have $c = 0$ or there exist $a, b \in R_{D}$ such that \[ -(4 c -1) = a^{2} \quad \text{and} \quad -(4 c +7) = b^{2} \, \text{.} \] In the latter case, we have $(a -b) (a +b) = 8$, which yields \[ a = \frac{(a -b)^{2} +8}{2 (a -b)} \in R_{D} \cap \left\lbrace \frac{e^{2} +8}{2 e} : e \in R_{D} \text{ and } N(e) \text{ divides } 64 \right\rbrace \, \text{,} \] and hence \[ a \in \begin{cases} \lbrace -3, -2, -i, i, 2, 3 \rbrace & \text{if } D = 1\\ \lbrace -3, 0, 3 \rbrace & \text{if } D = 2\\ \left\lbrace -3, \frac{-3 -i \sqrt{3}}{2}, \frac{-3 +i \sqrt{3}}{2}, \frac{3 -i \sqrt{3}}{2}, \frac{3 +i \sqrt{3}}{2}, 3 \right\rbrace & \text{if } D = 3\\ \lbrace -3, -1, 1, 3 \rbrace & \text{if } D = 7\\ \lbrace -3, 3 \rbrace & \text{otherwise} \end{cases} \, \text{.} \] Therefore, in the latter case, we have \[ c = \frac{1 -a^{2}}{4} \in \begin{cases} \left\lbrace -2, \frac{-3}{4}, \frac{1}{2} \right\rbrace & \text{if } D = 1\\ \left\lbrace -2, \frac{1}{4} \right\rbrace & \text{if } D = 2\\ \left\lbrace -2, \frac{-1 -3 i \sqrt{3}}{8}, \frac{-1 +3 i \sqrt{3}}{8} \right\rbrace & \text{if } D = 3\\ \lbrace -2, 0 \rbrace & \text{if } D = 7\\ \lbrace -2 \rbrace & \text{otherwise} \end{cases} \, \text{,} \] and hence $c \in \lbrace -2, 0 \rbrace$ since the polynomial $M_{4}^{f_{c}}$ splits into linear factors in $R_{D}[\lambda]$ by Corollary~\ref{corollary:multRoots} (see Table~\ref{table:proofSuper}). Thus, the lemma is proved.
\end{proof}

\begin{table}
\caption{Decomposition of $M_{4}^{f_{c}}$ into irreducible factors in $R_{D}[\lambda]$ for the values of $D$ and $c$ appearing in our proof of Lemma~\ref{lemma:proofSuper}.}
\label{table:proofSuper}
$\begin{array}{|c|c|c|}
\hline
D & c & \text{Factorization of } M_{4}^{f_{c}} \text{ in } R_{D}[\lambda]\\
\hline
\hline
1 & \frac{-3}{4} & \lambda^{3} -39 \lambda^{2} +939 \lambda -5221\\
\hline
1 & \frac{1}{2} & \lambda^{3} -44 \lambda^{2} +784 \lambda -8896\\
\hline
2 & \frac{1}{4} & \lambda^{3} -47 \lambda^{2} +779 \lambda -4861\\
\hline
3 & \frac{-1 -3 i \sqrt{3}}{8} & \lambda^{3} +\frac{-109 +3 i \sqrt{3}}{2} \lambda^{2} +\frac{1177 +15 i \sqrt{3}}{2} \lambda -2983 -1218 i \sqrt{3}\\
\hline
3 & \frac{-1 +3 i \sqrt{3}}{8} & \lambda^{3} +\frac{-109 -3 i \sqrt{3}}{2} \lambda^{2} +\frac{1177 -15 i \sqrt{3}}{2} \lambda -2983 +1218 i \sqrt{3}\\
\hline
\end{array}$
\end{table}

By Lemma~\ref{lemma:proofCases} and Lemma~\ref{lemma:proofSuper}, it remains to examine the quadratic rational maps that have a multiple fixed point and whose multipliers lie in the ring of integers of a given imaginary quadratic field. We prove that there is no such map.

\begin{lemma}
\label{lemma:proofSimple}
Assume that $D$ is a positive squarefree integer and $f \colon \widehat{\mathbb{C}} \rightarrow \widehat{\mathbb{C}}$ is a quadratic rational map whose multiplier at each cycle with period less than or equal to $5$ lies in $R_{D}$. Then the fixed points for $f$ are all simple.
\end{lemma}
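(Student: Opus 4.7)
The plan is to reduce, via Proposition~\ref{proposition:quadConj}, to two normal forms. If $f$ has a multiple fixed point, then either $f$ is conjugate to $h(z) = z + 1/z$ (triple fixed point at $\infty$, giving $\sigma_1^f = \sigma_2^f = 3$), or $f$ is conjugate to $g_{1,b}(z) = z(z+1)/(bz+1)$ for some $b \in \mathbb{C} \setminus \{1\}$ (double fixed point at $0$ of multiplier $1$, simple fixed point at $\infty$ of multiplier $b$, giving $\sigma_1^f = 2+b$ and $\sigma_2^f = 1+2b$). Since multiplier polynomials depend only on the conjugacy class (Corollary~\ref{corollary:quadConj} and Proposition~\ref{proposition:quadMult}), one may replace $f$ by its model and then use the formulas from the example to produce explicit polynomials in $\lambda$ (and $b$) with integer coefficients.

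In the $h$ case, direct substitution yields $M_2^f = \lambda - 9$, $M_3^f = (\lambda - 19)^2$, and $M_4^f = (\lambda - 31)^2(\lambda - 49)$, each of which splits over $\mathbb{Z} \subset R_D$. The obstruction must therefore come from $M_5^f$: I would compute it via Proposition~\ref{proposition:multDef} using the homogeneous lift $F(x,y) = (x^2 + y^2, xy)$ of $h$ together with computer algebra, obtaining an explicit element of $\mathbb{Z}[\lambda]$ of degree $6$, and then verify that it admits an irreducible factor over $\mathbb{Q}$ of degree at least $3$. Since every element of any $R_D$ is algebraic over $\mathbb{Q}$ of degree at most $2$, such a factor prevents $M_5^f$ from splitting in $R_D[\lambda]$ for any $D$, contradicting Corollary~\ref{corollary:multRoots}.

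In the $g_{1,b}$ case, the hypothesis forces $b \in R_D$, and substitution yields $M_3^f(\lambda) = \lambda^2 - (4b^2 + 16b + 18)\lambda + (36b^3 + 112b^2 + 124b + 89)$, whose discriminant factors as $16(b-1)^3(b+2)$. For $M_3^f$ to split in $R_D[\lambda]$, the element $(b-1)(b+2) \in R_D$ must be a square in $R_D$: writing $b^2 + b - 2 = u^2$ and setting $c = 2b + 1$, this becomes $(c-2u)(c+2u) = 9$ in $R_D$. The factorizations of $9$ in $R_D$ are finite and are non-trivial (beyond those in $\mathbb{Z}$) only for a handful of small $D$ where $2$ or $3$ splits or ramifies, so $b$ is confined to an explicit finite list; for generic $D$ only $b \in \{-3, -2, 2\}$ arise. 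For each remaining candidate one then computes $M_4^f$ (or, if needed, $M_5^f$) and verifies that it has an irreducible factor over $\mathbb{Q}$ of degree at least $2$ whose roots do not lie in $R_D$ -- for instance $M_4^{g_{1,2}}$ turns out to be an irreducible cubic over $\mathbb{Q}$. The main obstacle is the computational workload, both in producing $M_5^h$ and in systematically enumerating the $(D, b)$ pairs in the $g_{1,b}$ case; both rely on computer algebra, in the spirit of the paper's methodology.
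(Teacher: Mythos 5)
Your proposal is correct and follows essentially the same path as the paper: the same reduction to the two normal forms (the paper uses $g_{a,1}$ rather than $g_{1,b}$, which is the same conjugacy class), the same use of $M_{5}$ to rule out $h$, the same discriminant computation for $M_{3}$ leading to the factorization of $9$ in $R_{D}$ and a finite list of candidates, and the same final check via $M_{4}$. The only difference is that you defer the explicit computer-algebra computations, which the paper carries out and tabulates.
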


\begin{proof}
To obtain a contradiction, suppose that $f$ has a multiple fixed point. If $f$ has a unique fixed point, then $f$ is conjugate to $h$ by Proposition~\ref{proposition:quadConj}, and hence \[ M_{5}^{h}(\lambda) = \left( \lambda^{3} -309 \lambda^{2} +27399 \lambda -696691 \right)^{2} \] splits into linear factors in $R_{D}[\lambda]$ by Corollary~\ref{corollary:multRoots}, which is impossible since it is the square of an irreducible polynomial over $\mathbb{Q}$ of degree $3$ and $R_{D}$ is contained in an extension of $\mathbb{Q}$ of degree $2$. Thus, $f$ has exactly two fixed points, and it follows that $f$ is conjugate to $g_{a, 1}$ by Proposition~\ref{proposition:quadConj}, where $a \in R_{D} \setminus \lbrace 1 \rbrace$ is the multiplier of $f$ at its simple fixed point. By Corollary~\ref{corollary:multRoots}, the polynomial \[ M_{3}^{g_{a, 1}}(\lambda) = \lambda^{2} +\left( -4 a^{2} -16 a -18 \right) \lambda +36 a^{3} +112 a^{2} +124 a +89 \] splits into linear factors in $R_{D}[\lambda]$, and hence its discriminant \[ \disc M_{3}^{g_{a, 1}} = 2^{4} (a +2) (a -1)^{3} \] is a square in $R_{D}$. It follows that there exists $b \in R_{D}$ such that $(a -1) (a +2) = b^{2}$, and we have \[ (2 a -2 b +1) (2 a +2 b +1) = 9 \, \text{.} \] Therefore, we have \[ a = \frac{(2 a -2 b +1)^{2} -2 (2 a -2 b +1) +9}{4 (2 a -2 b +1)} \, \text{,} \] which yields \[ a \in \left( R_{D} \setminus \lbrace 1 \rbrace \right) \cap \left\lbrace \frac{c^{2} -2 c +9}{4 c} : c \in R_{D} \text{ and } N(c) \text{ divides } 81 \right\rbrace \, \text{,} \] and hence \[ a \in \begin{cases} \lbrace -3, -2, -1, 0, 2 \rbrace & \text{if } D = 2\\ \left\lbrace -3, -2, \frac{-1 -i \sqrt{3}}{2}, \frac{-1 +i \sqrt{3}}{2}, 2 \right\rbrace & \text{if } D = 3\\ \lbrace -3, -2, 2 \rbrace & \text{otherwise} \end{cases} \, \text{.} \] Note that the polynomial \[ M_{4}^{g_{-3, 1}}(\lambda) = (\lambda -31) \left( \lambda^{2} +80 \lambda +1231 \right) \] does not split into linear factors in $R_{D}[\lambda]$ since it has two non-integer real roots. Moreover, the polynomials \[ M_{4}^{g_{-2, 1}}(\lambda) = \lambda^{3} +9 \lambda^{2} +123 \lambda +1307 \quad \text{and} \quad M_{4}^{g_{2, 1}}(\lambda) = \lambda^{3} -231 \lambda^{2} +17211 \lambda -407861 \] do not split into linear factors in $R_{D}[\lambda]$ either since they are irreducible over $\mathbb{Q}$ of degree $3$ and $R_{D}$ is contained in an extension of $\mathbb{Q}$ of degree $2$. This contradicts the fact that $M_{4}^{g_{a, 1}}$ splits into linear factors in $R_{D}[\lambda]$ by Corollary~\ref{corollary:multRoots} (see Table~\ref{table:proofSimple}). Thus, the lemma is proved.
\end{proof}

\begin{table}
\caption{Decomposition of $M_{4}^{g_{a, 1}}$ into irreducible factors in $R_{D}[\lambda]$ for the values of $D$ and $a$ appearing in our proof of Lemma~\ref{lemma:proofSimple}.}
\label{table:proofSimple}
$\begin{array}{|c|c|c|}
\hline
D & a & \text{Factorization of } M_{4}^{g_{a, 1}} \text{ in } R_{D}[\lambda]\\
\hline
\hline
2 & -1 & \lambda^{3} -15 \lambda^{2} +255 \lambda -1457\\
\hline
2 & 0 & \lambda^{3} -47 \lambda^{2} +779 \lambda -4861\\
\hline
3 & \frac{-1 -i \sqrt{3}}{2} & \lambda^{3} +\left( -21 +14 i \sqrt{3} \right) \lambda^{2} +\left( 99 -124 i \sqrt{3} \right) \lambda -1279 +542 i \sqrt{3}\\
\hline
3 & \frac{-1 +i \sqrt{3}}{2} & \lambda^{3} +\left( -21 -14 i \sqrt{3} \right) \lambda^{2} +\left( 99 +124 i \sqrt{3} \right) \lambda -1279 -542 i \sqrt{3}\\
\hline
\end{array}$
\end{table}

Finally, we have proved Theorem~\ref{theorem:main}, which follows immediately from Lemma~\ref{lemma:proofCases}, Lemma~\ref{lemma:proofSuper} and Lemma~\ref{lemma:proofSimple}.

\providecommand{\bysame}{\leavevmode\hbox to3em{\hrulefill}\thinspace}
\providecommand{\MR}{\relax\ifhmode\unskip\space\fi MR }
% \MRhref is called by the amsart/book/proc definition of \MR.
\providecommand{\MRhref}[2]{%
	\href{http://www.ams.org/mathscinet-getitem?mr=#1}{#2}
}
\providecommand{\href}[2]{#2}

\end{document}